\DeclareMathOperator{\End}{End} 
  \DeclareMathOperator{\Ob}{Ob}
 \DeclareMathOperator{\Mor}{Mor}  
 \DeclareMathOperator{\diag}{diag} \DeclareMathOperator{\coker}{coker}
\numberwithin{equation}{section}
\newtheorem{theorem}{Theorem}
\numberwithin{theorem}{section}
\newtheorem{lemma}[theorem]{Lemma}
\newtheorem{proposition}[theorem]{Proposition}
\newtheorem{corollary}[theorem]{Corollary}
\theoremstyle{definition}
\newtheorem{example}[theorem]{Example}
\newcommand{\Z}{{\mathbb{Z}}}
\def\pt{*}
\def\tilde{\widetilde}
\def\smash{\wedge}
\def\isom{\cong}
\def\tensor{\otimes}
\def\into{\hookrightarrow}
\def\genby#1{{\langle #1\rangle}}
\newcommand{\Ast}{\mathop{\mathlarger{\mathlarger{\ast {} }} {} }}
\def\Loop{\Omega}
\def\wedgesum{\vee}
\def\op{\mathrm{op}}
\def\phi{\varphi}
\def\cat#1{\mathsf{#1}}
\def\Grpd{\cat{Grpd}}
\def\Mon{\cat{Mon}}
\def\Cat{\cat{Cat}}
\def\PtSet{\cat{Set}_*}
\def\sMon{\cat{sMon}}
\def\sSet{\cat{sSet}}
\def\FinOrd{\cat{\Delta}}
\def\Set{\cat{Set}}
\def\embed{\into}
\def\bar{\overline}
\def\Z{{\mathbb Z}}
\def\N{{\mathbb N}}
\def\barone{\bar{[1]}}
\def\barcy{\bar{[c_y]}}
\def\embed{\hookrightarrow}
\def\tensor{\otimes}
\def\co{\colon\thinspace}
\begin{document}

\title{Simplicial Monoid Actions and The Associated Universal Simplicial Monoid Construction\footnote{The authors gratefully acknowledge the assistance of Singapore Ministry of Education research grants AcRF Tier 1(WBS No. R-146-000-137-112) and AcRF Tier 2 (WBS No. R-146-000-143-112). The 2nd author is supported in part by a grant (No. 11028104) of NSFC of China.}}
\author{Man Gao and Jie Wu}
\date{}
\maketitle

\begin{abstract}
The reduced universal monoid on the action category associated to a pointed simplicial M-set has
appeared in the guise of various simplicial monoid and group constructions. These include the
classical constructions of Milnor and James, as well as their later generalizations by Carlsson and
Wu. We prove that, if any two n-simplices in the same orbit differ by the action of an invertible
monoid element, then the classifying space of this reduced universal monoid is the homotopy cofiber
of the inclusion from the pointed simplicial set into its reduced Borel construction. The known
formulae for the respective classifying spaces of the above four constructions are special cases of
this result. Thus, we unify categorially the four above-mentioned constructions.
\end{abstract}

\section{Introduction}

James' construction \cite{James55} on a pointed simplicial set $X$ is the simplicial monoid
obtained by applying the reduced free monoid functor in each dimension. Similarly, Milnor's
construction \cite{Milnor72} is the simplicial group obtained by applying the reduced free group
functor in each dimension. The geometric realization of both constructions is the loop-suspension
space $\Omega\Sigma |X|$ (see Section \ref{sec: smon actions}.)

The monoid or group structure facilitates the application of algebraic techniques to study the
homotopy and homology of the loop-suspension space. For example, the word length filtration on the
free monoid is used to prove James Splitting Theorem (see the original paper \cite{James55} or
pages 143-144 of \cite{Neisendorfer10}):
\[
\Sigma\Omega\Sigma |X| \simeq \bigvee_{n=1}^\infty |X|^{\smash n}
\]
Here $|X|^{\smash n}$ is the $n$-fold smash product of the pointed CW complex $|X|$.

Given the action of a discrete group on a pointed simplicial set $X$, Carlsson \cite{Carlsson84}
constructed a simplicial group whose classifying space is the homotopy cofiber of the inclusion of
$|X|$ into its reduced Borel construction (see Section \ref{sec: smon actions}.) Milnor's
construction is the special case of Carlsson's construction on trivial actions of the integers $\Z$
under addition. Carlsson also had an unreduced version of his simplicial group construction.

Carlsson's construction and its classifying space are of interest to homotopy theory. As Carlsson
noted \cite{Carlsson84}, certain quotients of a Thom complex by its lowest cell are the geometric
realization of his construction. This includes all stunted projective spaces. Wu \cite{Wu97}
constructed a simplicial monoid that depends on a data of a pointed simplicial set and a simplicial
monoid. If the simplicial monoid in the given data is the discrete monoid $\N$ of natural numbers
under addition, then Wu's construction reduces to James' construction. If the simplicial monoid in
the given data is a discrete group, then Wu's construction reduces to Carlsson's construction on
discrete group actions.

We observe that each of the above four constructions is in fact the reduced universal simplicial
monoid on the pointed simplicial action category associated to a pointed simplicial $M$-set. Let
$U\co\Cat\to \Mon$ be left adjoint to the fully faithful functor $\Mon\into \Cat$ that views a
monoid as a small category with one object. A {\textit{pointed category}} is a category with a
distinguished object as basepoint (see \ref{para: dunno}.) Given a small pointed category $C$,
write $\End_C(\pt)$ for the full subcategory of $C$ whose only object is the basepoint, the
{\textit{reduced universal monoid of $C$}} is defined as:
\begin{equation} \label{eq: red univ monoid def}
U[C] = \coker(U(\End_C(\pt)) \to U(C))
\end{equation}

View a right $M$-set as a functor $X\co M^\op\to \Set$. The {\textit{action category}} of $M$ is
the category of elements $X//M :=\int^{M^\op} \! X$. Explicitly, an object of $X//M$ is an element
of $X$ and a morphism $x\to y$ in $X//M$ is a monoid element $m\in M$ such that $y=xm$ (see Section
\ref{sec: smon actions}.) Define the {\textit{$J^M[X]$-construction on $X$}} as the monoid
$J^M[X]:=U[X//M]$. Write $J[\bullet]$ for the reduced free monoid functor (see \ref{para: red free
mon}). The $J^M[X]$-construction can be given explicitly as
$$\frac{J[X\smash M]}{\genby{\forall x\in X \forall m,k\in M\, (x\smash m)\cdot (xm\smash k)\sim (x\smash mk)}}.$$
In the simplicial case where $X$ is a pointed simplicial right $M$-set and $M$ is a simplicial
monoid, define $J^M[X]$ dimensionwise by setting $(J^M[X])_n=J^{M_n}[X_n]$.

Under a certain sufficient condition, we are able to compute the classifying space of $J^M[X]$. The
following theorem generalizes the results of Carlsson \cite{Carlsson84} and Wu \cite{James55}. As a
further consequence, this theorem implies that both the classifying spaces of Milnor's construction
and James' construction is $\Sigma |X|$.

\begin{theorem} \label{thm: cofiber seq for actions}
Let $X$ be a pointed right simplicial $M$-set where $M$ is a simplicial monoid. If for all $n$, for
all $x\in X_n$ and for all $m\in M_n$, there exists an invertible $k\in M_n$ such that $xm=xk$,
then $BJ^M[X]$ is the homotopy cofiber of the inclusion $|X|\into |X\rtimes_M EM|$.
\end{theorem}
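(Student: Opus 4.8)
The plan is to realize both sides of the claimed equivalence as quotients of the nerve $N(X//M)$ and to match them by an iterated-cofiber argument. First I would record that the nerve of the action category is a bar construction: a $p$-simplex of $N(X//M)$ is a chain $x\xrightarrow{m_1}\cdots\xrightarrow{m_p}$, so $N(X//M)_p\cong X\times M^p$ and $|N(X//M)|$ is the Borel construction $X\times_M EM$. Since the basepoint is $M$-fixed, the full subcategory $\End_{X//M}(\pt)$ is $M$ and its nerve is the subcomplex $BM\subset N(X//M)$ of chains based at $\pt$; collapsing it yields the reduced Borel construction $X\rtimes_M EM$. Under these identifications the map $|X|\into|X\rtimes_M EM|$ is the realization of the inclusion of objects $X=(X//M)_0\into N(X//M)$ followed by this collapse.

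The collapsing functor $X//M\to U[X//M]$, which crushes every object to the single object of the monoid and sends each endomorphism of $\pt$ to the unit, induces on nerves a natural pointed map
\[
c\co N(X//M)\big/\bigl(X\cup BM\bigr)\longrightarrow N\bigl(U[X//M]\bigr)=BJ^M[X],
\]
sending a chain to the tuple of its morphisms viewed as monoid generators. I claim $c$ is a weak equivalence; granting this, the theorem is formal. Indeed $X$ and $BM$ are subcomplexes of $N(X//M)$ with $X\rtimes_M EM=|N(X//M)/BM|$, so the homotopy cofiber of the cofibration $|X|\into|X\rtimes_M EM|$ is the further quotient $|N(X//M)/(X\cup BM)|$, which $c$ identifies with $BJ^M[X]$.

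To prove that $c$ is a weak equivalence I would filter $J^M[X]=U[X//M]$ by word length — recall it is a quotient of the reduced free monoid $J[X\smash M]$ — and filter the collapsed nerve compatibly by the number of non-degenerate edges of a chain. The map $c$ preserves the filtrations, and the crux is to identify the associated graded pieces on the two sides. This is exactly where the hypothesis enters: the condition that every $m\in M_n$ acts on each $x\in X_n$ as some invertible $k$ says that in $X//M$ every morphism is parallel to an isomorphism, so that the non-invertible generators of $U[X//M]$ contribute no additional homotopy and the filtration quotients of $BJ^M[X]$ collapse onto the same reduced complexes as those of the collapsed nerve. Comparing the two filtrations degreewise and passing to the colimit then shows $c$ induces an isomorphism on homotopy.

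The main obstacle is precisely this identification of associated graded pieces. A priori $BJ^M[X]$ carries far more simplices than the nerve — arbitrary tuples of words rather than tuples of single generators — and one must show that the composition relations of the universal monoid fill in exactly this difference up to homotopy. Controlling the redundant cells is the delicate point, and the invertibility hypothesis is what makes it possible; I expect the argument to parallel Carlsson's quasifibration analysis for group actions, here adapted to a monoid whose action is homotopically invertible in the stated sense, which also recovers his and Wu's computations as the special cases $M$ a group and $M=\N$.
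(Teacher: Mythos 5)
Your reduction of the theorem to a single weak equivalence is exactly the paper's: both arguments identify $|X|$ with $B\Ob(X//M)$, the reduced Borel construction with $B(X//M)/B\End_{X//M}(\pt)$, and $J^M[X]$ with $U[X//M]$, so that everything rests on showing that the collapse map $c\co N(X//M)/(X\cup N\End_{X//M}(\pt))\to NU[X//M]$ is a weak equivalence. The gap is that you do not actually prove this. Your proposed word-length filtration is a sketch of intent, not an argument: the relations $(x\smash m)\cdot(xm\smash k)\sim(x\smash mk)$ defining $U[X//M]$ identify words of length two with words of length one, so while the word-length filtration is defined, its associated graded pieces are quotients of smash powers by identifications that are not described, and nothing you say matches them with the filtration of the collapsed nerve by number of nondegenerate edges. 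The sentence asserting that the invertibility hypothesis makes ``the non-invertible generators contribute no additional homotopy'' is precisely the statement that needs proof, and appealing to an expected parallel with Carlsson's quasifibration analysis does not supply one. As you yourself note, $NU[X//M]$ has far more simplices (tuples of arbitrary words) than the collapsed nerve (chains of single composable morphisms); bridging that difference is the entire content of the theorem.

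The paper closes this gap by a purely categorical, dimensionwise argument that never mentions word length. The hypothesis is shown (Proposition \ref{prop: crit act cat equiv to one obj}) to be equivalent to each $(X//M)_n$ being equivalent to a totally disconnected category. A connected category equivalent to a one-object category decomposes, after a choice of isomorphisms $c_y\co x\to y$, as a wedge $\End_C(x)\vee\bigvee_{y\neq x}\bar{[c_y]}$ with each $\bar{[c_y]}\isom\bar{[1]}$ (Proposition \ref{prop: wedgesum decomposition of cat}). Since $U$ is a left adjoint it converts this wedge into a free product of monoids, and the key external input is Fiedorowicz's theorem that the classifying space of monoids commutes with free products up to homotopy; the base cases are the elementary computations that $N\eta$ is an isomorphism for a one-object category and a weak equivalence for $\bar{[1]}$ (where both sides are a circle). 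Disjoint components are then handled by a coproduct argument, and the simplicial case follows because a pointwise weak equivalence of bisimplicial sets realizes to a homotopy equivalence. If you want to salvage your filtration route you would essentially be re-proving Carlsson's and Wu's special cases by hand; the decomposition route is what lets the paper treat all four constructions uniformly, so I would redirect your effort toward proving the weak equivalence degreewise via the wedge decomposition rather than via associated graded pieces.
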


To prove this theorem, we consider the setting of the reduced simplicial monoid on a general
pointed simplicial category. (Throughout this paper, a {\textit{simplicial category}} refers to a
simplicial object in $\Cat$.) We prove the following more abstract proposition. Recall that the
nerve of a simplicial category $C$ is a bisimplicial set whose geometric realization is the
{\textit{classifying space}} $BC$ (see \ref{para: nerve}.) Also recall that a category is
{\emph{totally disconnected}} if each connected component has only one object.
\begin{proposition} \label{prop: red geom realization of nerve of U(C)}
Let $C$ be a pointed simplicial category. If $C_n$ is equivalent to a totally disconnected category
for all $n$, then $BU[C]$ is the homotopy cofiber of $B\!\Ob(C)\to BC/B\End_C(\pt)$.
\end{proposition}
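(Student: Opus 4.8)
The plan is to reduce to a levelwise statement about ordinary categories and then to compare the nerve of the universal monoid with the nerve of $C$ directly. Since $BU[C]$ and $BC$ are the geometric realizations of the simplicial spaces $[n]\mapsto BU[C]_n$ and $[n]\mapsto BC_n$, and since realization of proper simplicial spaces preserves homotopy cofiber sequences, it suffices to produce, functorially in the ordinary pointed category $C_n$, a homotopy cofiber sequence $B\Ob(C_n)\to BC_n/B\End_{C_n}(\pt)\to BU[C_n]$; functoriality in $C_n$ supplies the naturality in $n$ that makes the realizations assemble correctly. I would therefore fix a single ordinary pointed category $C$ that is equivalent to a totally disconnected category and keep every construction natural in $C$, avoiding any choice of equivalence (which in any case could not be made coherently across all simplicial degrees).

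I would first handle the unreduced monoid. The claim to establish is that $BU(C)$ is the homotopy cofiber of $B\Ob(C)\to BC$, equivalently that $BC/B\Ob(C)\simeq BU(C)$. The functor $C\to U(C)$ onto the one-object category $U(C)$ induces the comparison map; to see it is a weak equivalence I would model $BU(C)$ by the two-sided bar construction $B(\pt,U(C),\pt)$ and filter it by word length in the generating morphisms of $C$, matching the resulting description against the reduced nerve $BC/B\Ob(C)$, whose simplices are composable chains with all objects collapsed to the basepoint. The reduced statement then follows by a further collapse: from $U[C]=\coker(U(\End_C(\pt))\to U(C))$ I would identify $BU[C]$ with $BU(C)/B\End_C(\pt)$, hence with $(BC/B\Ob(C))/B\End_C(\pt)$, and finally check that the two collapses commute so that this is $\mathrm{hocofib}(B\Ob(C)\to BC/B\End_C(\pt))$.

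The hypothesis that each $C_n$ is equivalent to a totally disconnected category enters exactly at the two weak equivalences. Its effect is to straighten morphisms: since every morphism inside a connected component can be replaced, up to the relations defining $U(C)$, by an isomorphism between isomorphic objects, the submonoid $\End_C(\pt)$ acts freely enough that collapsing $B\End_C(\pt)$ inside $BU(C)$ genuinely computes the classifying space of the cokernel monoid, and the word-length filtration collapses onto the composable-chain model. As a guide and consistency check, in the genuinely totally disconnected case $C=\coprod_a\End_C(a)$ one has $U(C)\isom\Ast_a\End_C(a)$ and hence $U[C]\isom\Ast_{a\neq\pt}\End_C(a)$, so both sides reduce to $\bigvee_{a\neq\pt}B\End_C(a)$.

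The main obstacle I anticipate is reconciling the two nerves: the $k$-simplices of the nerve of a monoid are arbitrary $k$-tuples of its elements, i.e.\ tuples of words in the morphisms of $C$, whereas the $k$-simplices of the nerve of $C$ are composable chains, so a priori the two simplicial sets are very different and only their realizations can be expected to agree. Making the comparison map a weak equivalence is where the real work lies, and it is precisely here that the straightening afforded by the hypothesis is indispensable. The secondary difficulty is to carry this out without choosing the component-wise isomorphisms, since such choices cannot be made compatibly in the simplicial direction; I would circumvent this by phrasing the straightening as a Quillen Theorem A (homotopy-colimit) argument, which is canonical, rather than by replacing $C$ with an equivalent totally disconnected category.
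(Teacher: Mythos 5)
Your overall architecture is sound and matches the paper's: reduce to a levelwise statement, prove the unreduced case first, then deduce the reduced one. But the central step --- that the natural comparison map into $NU(C_n)$ (resp.\ $NU[C_n]$) is a weak equivalence when $C_n$ is equivalent to a totally disconnected category --- is exactly where you stop and say ``this is where the real work lies,'' and the two mechanisms you propose for it do not close the gap. First, the word-length filtration on $B(\pt,U(C),\pt)$: you never explain how the relations $g\circ f\sim f\cdot g$ interact with word length, nor how the hypothesis enters the filtration; the paper instead proves this by \emph{choosing} isomorphisms $c_y\colon x\to y$ in each component and exhibiting an isomorphism of pointed categories $\End_C(x)\vee\bigvee_{y\neq x}\bar{[c_y]}\isom C$, which reduces everything to two computable cases ($N\eta$ for a one-object category is an isomorphism, and $N\eta_{\barone}$ is the equivalence $S^1\simeq B\Z$) glued by Fiedorowicz's theorem that $B$ of a free product of monoids is the wedge of classifying spaces. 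Second, your identification $BU[C]\simeq BU(C)/B\End_C(\pt)$ is false for general monoid quotients (e.g.\ $\coker(\Z\xrightarrow{\times 2}\Z)=\Z/2$ has $B(\Z/2)=\mathbb{RP}^\infty$, while the cofiber of $B\Z\xrightarrow{2}B\Z$ is $\mathbb{RP}^2$); it holds here only because the hypothesis forces $U(C)\isom U(\End_C(x))\ast\Ast_{y\neq x}U(\bar{[c_y]})$, so that the cokernel strips off a free factor --- i.e.\ it holds only as a consequence of the very decomposition you are trying to avoid.

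Your ``secondary difficulty'' about making the isomorphisms $c_y$ compatible across simplicial degrees is a red herring, and the Quillen Theorem A machinery you invoke to dodge it is unnecessary. The map $N\eta_C$ is defined naturally, with no choices; the choices enter only in \emph{verifying} that this fixed natural map is a weak equivalence in each degree, which may be done one degree at a time with incompatible choices. Once each $N\eta_{C_n}$ is a weak equivalence, the pointwise weak equivalence of bisimplicial sets induces a weak equivalence on diagonals (Bousfield--Kan), which is all that is needed --- no properness of simplicial spaces, no coherence. So the honest assessment is: you have correctly located the crux and the role of the hypothesis, but the proof of the crux is missing, and the sketched substitutes either need substantial development or rest on an identification that is not true without the free-product decomposition.
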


Theorem \ref{thm: cofiber seq for actions} is then a consequence of Proposition \ref{prop: red geom
realization of nerve of U(C)} in the case where $C=X//M$ is the pointed simplicial action category
associated to the simplicial monoid action of $M$ on $X$.

Finally, we remark that there is an unreduced version of all the above. In fact, the unreduced case
is used to prove the reduced case. In this introduction, we stated the results for the reduced
$J^M[X]$-construction. It is easier to work with the reduced construction in concrete applications
to homotopy and reduced homology. Some unreduced results can be found below.

\section{Preliminaries}

\paragraph{The Reduced Free Monoid and Group.} \label{para: red free mon} Let $J(S)$ denote the free monoid on a set $S$. If $S$ is pointed, its {\textit{reduced free monoid}} is $J[X]:=\coker(J(\pt)\to J(X))$. The reduced free monoid functor $J[\bullet]:\PtSet\to \Mon$ be the left adjoint of the forgetful functor $\Mon\embed\PtSet$ that views a monoid as a set whose basepoint is the identity element. Throughout, we use round brackets $(\bullet)$ to indicate an unreduced construction and square brackets $[\bullet]$ for a reduced construction. The letter $J$ is chosen in honor of James whose construction on a pointed simplicial set $X$ is obtained by the dimensionwise application of the reduced free monoid functor (see Section \ref{sec: smon actions}.)

Similarly, let $F(S)$ denote the free group on a set $S$. If $S$ is pointed, its {\textit{reduced free group}} is $F[X]:=\coker(F(\pt)\to F(X))$. The reduced free monoid functor $J[\bullet]\co\PtSet\to \Mon$ be the left adjoint of the forgetful functor $\Mon\embed\PtSet$ that views a monoid as a set whose basepoint is the identity element. The letter $F$ is chosen as Milnor's construction on a pointed simplicial set $K$ is commonly known as the $FK$-construction (see Section \ref{sec: smon actions}.)

\paragraph{Small Categories.} \label{para: small cat} A category $C$ is {\textit{small}} if the collection of morphisms $\Mor(C)$ is a set. Hence the collection of objects $\Ob(C)$ is also a set. It is an easy exercise to show that $\Ob$ commutes with coproducts of small categories. Write $t(f)$ and $s(f)$ for the source and target of a morphism $f\in \Mor(C)$ respectively. We use the convention that large categories are denoted in {\textsf{sans serif}}. A category in ordinary font is understood to be small.

\paragraph{Simplicial Sets.} A simplicial set is a presheaf on $\FinOrd$, whose objects are the finite linearly ordered sets $[n]=\{0< 1<\cdots< n\}$ and whose morphisms are weakly order-preserving maps. Recall that a poset $(P,\le)$ can be viewed as a poset category whose objects are the members of $P$ and there is a morphism $p\to q$ whenever $p\le q$. Viewing the linearly ordered set $[n]$ as a poset category thus gives a fully faithful functor $\FinOrd\into \Cat$.

\paragraph{Bisimplicial sets.} \label{para: sset} The geometric realization of a bisimplicial set is defined to be the geometric realization of its diagonal.  A map $X\to Y$ of bisimplicial sets is a {\textit{pointwise weak homotopy equivalence}} if for every $n$, the restriction $X_{n\bullet}\to Y_{n\bullet}$ is a weak homotopy equivalence of simplicial sets. If a map of bisimplicial sets $f\co X\to Y$ is a pointwise weak homotopy equivalence, then its diagonal $\diag f\co \diag X\to \diag Y$ is a weak homotopy equivalence of simplicial sets (see Chapter XII, Section 4 in \cite{BousfieldKan87}) and hence its geometric realization $|X|\to |Y|$ is a homotopy equivalence of CW complexes by Whitehead's Theorem.

\paragraph{Nerve.} \label{para: nerve} The {\textit{nerve}} of a small category $C$ is the simplicial set $NC$ whose $n$-simplices are the sequences
$x_0\to x_1\to\cdots\to x_n$ of composable morphisms in $C$. Dimensionwise application of the nerve functor to a simplicial category $C$ (which in this paper refers to a simplicial object in $\Cat$) yields a bisimplicial set $NC$ whose geometric realization is the {\textit{classifying space $BC$.}}

The nerve functor $N:\Cat\to\sSet$ is fully faithful. It is also a right adjoint and hence commute with all limits. The nerve also commute with arbitrary coproducts. The nerve functor also interacts well with the 2-categorial structure: if $\phi\dashv \psi$ is an adjoint pair of functors, then the simplicial map $N\phi$ is homotopy inverse to $N\psi$. In particular, an equivalence of categories $C\simeq D$ implies a homotopy equivalence $NC\simeq ND$.

\paragraph{The Classifying Space and The Wedge Sum.} \label{para: dunno} A pointed object in a category $\cat{C}$ is a morphism in $\cat{C}$ whose source is the terminal object. Let $\cat{C}_*$ denote the category of pointed objects in $\cat{C}$. The coproduct in $\cat{C}_*$ is called the {\textit{wedge sum}}, written $\bigvee$. Monoids are uniquely pointed, hence the wedge sum of pointed monoids coincides the coproduct of monoids: this is usually called the {\textit{free product}} $\Ast$. The classifying space of topological monoids commutes, up to homotopy, with the wedge sum (see \cite{Fiedorowicz84}). We will use this result in the following form: given a family of monoids $\{M_i\}$, each of which is viewed as a category with only one object, the inclusions $M_j\to\Ast M_i$ induce a weak homotopy equivalence of simplicial sets:
\[
\bigvee NM_i \to N(\Ast M_i)
\]

\paragraph{Looping the Classifying Space} \label{para: loop nerve sgrp} For every simplicial group $H$, there is a homotopy equivalence
\[
|H|\simeq \Loop BH
\]
This can be proven as follows. There is a weak homotopy equivalence $G(\bar{W}H) \to H$ where $G$ is Kan's construction and $\bar{W}$ is the bar construction (see Page 270 in \cite{GoerssJardine99}). There is another weak homotopy equivalence $\diag NH \to \bar{W} H$ (this is Proposition 3.1 of \cite{Wu97}). Combining these gives a weak homotopy equivalence $G(\diag NH)\to H$. The geometric realization of Kan's construction is the loop functor, hence taking geometric realization gives the result.

\section{The Universal Monoid and Its Classifying Space}

The unit map $\eta_C:C\to U(C)$ is the cokernel of $\Ob(C)\to C$, viewing $\Ob(C)$ as a discrete subcategory of $C$. Hence $N\eta_C:NC\to NU(C)$ factors through the quotient simplicial set $NC/N\!\Ob(C)$. This induces a map
\begin{equation} \label{eq: THE old map}
NC/N\!\Ob(C)\to NU(C).
\end{equation}
For any set $S$, there is an isomorphism $NS\isom S$ where $S$ is viewed as a discrete category when taking the nerve and is viewed as a discrete simplicial set on the right-hand side. This is proven by observing that a set is a coproduct of singletons and that the nerve functor commutes with coproducts (see \eqref{para: nerve}), so this reduces to the obvious fact that the standard 0-simplex $N*=N[0]$ is a singleton in each dimension.

In particular $N\!\Ob(C)\isom \Ob(C)$. Via this isomorphism, the map \eqref{eq: THE old map} becomes the following map which we shall call $N\eta_C$ by abuse of notation
\begin{equation} \label{eq: THE final map}
N\eta_C:NC/\Ob(C)\to NU(C).
\end{equation}

\begin{lemma} \label{lem: nerve unit map of monoid}
If the category $M$ has only one object, then $N\eta_M:NM/\Ob(M)\to NU(M)$ is an isomorphism of simplicial sets.
\end{lemma}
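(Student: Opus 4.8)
The plan is to reduce the claim to two independent observations: that $U$ does nothing to a one-object category, and that collapsing the single vertex of a reduced simplicial set is harmless.

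First I would record that $U(M)\isom M$ whenever $M$ has a single object. Since the inclusion $\Mon\into\Cat$ is fully faithful and $U$ is its left adjoint, the counit $U\circ(\Mon\into\Cat)\to\Id_{\Mon}$ is a natural isomorphism; by the triangle identity the unit $\eta_M\co M\to U(M)$ is then an isomorphism of categories for every $M$ lying in the image of the inclusion, i.e.\ every one-object category. Concretely, the same conclusion is visible from the description (given just before the lemma) of $\eta_M$ as the cokernel of $\Ob(M)\to M$: as $\Ob(M)$ is already a single object, collapsing it changes nothing. Applying the nerve functor, $N\eta_M\co NM\to NU(M)$ is an isomorphism of simplicial sets.

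Next I would observe that $NM/\Ob(M)\isom NM$. Because $M$ has one object, $(NM)_0=\Ob(M)$ is a single point, so $NM$ is a reduced simplicial set, and by the isomorphism $N\!\Ob(M)\isom\Ob(M)$ recalled in the text the subobject $\Ob(M)\into NM$ consists of exactly one simplex in each dimension, namely the totally degenerate simplex on the unique vertex. Quotienting by this subobject therefore only identifies, in each dimension, one already-distinguished simplex with the basepoint, so the quotient map $NM\to NM/\Ob(M)$ is a dimensionwise bijection, hence an isomorphism of simplicial sets.

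Finally I would assemble these in a commuting triangle. By construction the map named $N\eta_M$ in \eqref{eq: THE final map} is the factorization of the un-quotiented map $NM\to NU(M)$ through the quotient $NM\to NM/\Ob(M)$. The composite $NM\to NM/\Ob(M)\to NU(M)$ is an isomorphism by the first step, and the quotient map is an isomorphism by the second, so the factored map $NM/\Ob(M)\to NU(M)$ is an isomorphism as well. The main obstacle — really the only substantive point — is the first step, being sure that $U$ fixes one-object categories; everything afterward is formal. The second step is a routine dimensionwise check, and one must only take care that the map denoted $N\eta_M$ by abuse of notation is genuinely the claimed factorization, so that the triangle commutes.
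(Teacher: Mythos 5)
Your proof is correct and takes essentially the same approach as the paper's, which likewise rests on exactly two observations: that $\Ob(M)$ is the standard $0$-simplex (so quotienting by it changes nothing) and that $\eta_M\co M\to U(M)$ is an isomorphism when $M$ has one object. You merely supply more justification for each step (the fully-faithful-right-adjoint argument and the dimensionwise count) than the paper bothers to record.
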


\begin{proof}
Since $M$ has only one object, the discrete simplicial set $\Ob(M)$ is isomorphic to the standard 0-simplex. The proof is complete by noting that the unit map $\eta_M:M\to U(M)$ is an isomorphism since $M$ has only one object.
\end{proof}

Let $\bar{\bullet}:\Cat\to \Grpd$ be the left adjoint of the fully faithful functor $\Grpd\into \Cat$ that views a small groupoid as a small category. The groupoid $\bar{C}$ is said to be the {\textit{groupoid completion}} of a small category $C$.

\begin{lemma} \label{lem: nerve unit map of barone}
The simplicial map $N\eta_{\bar{[1]}}:N\barone/ \Ob(\barone)\to NU(\bar{[1]})$ is a weak homotopy equivalence.
\end{lemma}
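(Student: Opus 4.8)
The plan is to identify both sides explicitly, recognize each as a $K(\Z,1)$, and then check that $N\eta_{\barone}$ is an isomorphism on $\pi_1$. First I would compute the universal monoid. As the groupoid completion of the poset $(0\to 1)$, the groupoid $\barone$ is the indiscrete groupoid on the two objects $0,1$: besides the identities it has a single morphism $a\co 0\to 1$ together with its inverse $b\co 1\to 0$, satisfying $ba=\id_0$ and $ab=\id_1$. Applying $U$ sends both identities to the unit of the monoid and turns composition into multiplication, so $U(\barone)$ is the monoid generated by the images $\alpha,\beta$ of $a,b$ subject to $\alpha\beta=\beta\alpha=1$. Thus $\alpha$ is invertible with inverse $\beta$ and there are no further relations, whence $U(\barone)\isom\Z$. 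Consequently $NU(\barone)=N\Z$ is the standard model of $B\Z\simeq S^1$, a connected aspherical space whose fundamental group $\Z$ is generated by $\alpha$.

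Next I would identify the source. Since $\barone$ is the indiscrete groupoid on two objects, the inclusion of the full subcategory on $0$ (a terminal category) is an equivalence of categories, so by \ref{para: nerve} the nerve $N\barone$ is contractible. The inclusion $\Ob(\barone)\into N\barone$ of the discrete two-point simplicial set as the $0$-simplices is a cofibration into a contractible simplicial set, so the strict quotient computes the homotopy cofiber of $\Ob(\barone)\into N\barone$; as the total space is contractible this cofiber is the suspension $\suspension S^0\simeq S^1$. In particular the source is again a connected $K(\Z,1)$. To pin down a generator of its fundamental group, I would read off the relations coming from the two nondegenerate $2$-simplices $(0,1,0)$ and $(1,0,1)$ of $N\barone$: after the vertices are collapsed these give $ba=1$ and $ab=1$, so $\pi_1(N\barone/\Ob(\barone))=\langle a,b \mid ab=ba=1\rangle\isom\Z$, generated by the class of the $1$-simplex $a$.

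Finally I would compare the two. By construction $N\eta_{\barone}$ carries the $1$-simplex $a$ to $\eta_{\barone}(a)=\alpha$, hence sends the generator of $\pi_1(N\barone/\Ob(\barone))$ to the generator $\alpha$ of $\pi_1(NU(\barone))$; the induced homomorphism $\Z\to\Z$ is therefore an isomorphism. Since source and target are both connected and aspherical, a map inducing an isomorphism on $\pi_1$ is a weak homotopy equivalence, which proves the lemma.

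The main obstacle is the bookkeeping in the first and last steps: correctly computing $U(\barone)\isom\Z$, and—more delicately—verifying that the induced map on $\pi_1$ sends a generator to a generator rather than to a nontrivial multiple or to zero. Everything else (contractibility of $N\barone$, the cofiber identification $N\barone/\Ob(\barone)\simeq S^1$, and the asphericity upgrade from a $\pi_1$-isomorphism to a weak equivalence) is routine once these generators have been matched.
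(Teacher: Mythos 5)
Your proof is correct and takes essentially the same route as the paper's: both compute $U(\barone)\isom\Z$ so that the target is a $K(\Z,1)$, use the equivalence $\barone\simeq[0]$ to see that $N\barone$ is contractible and hence that the quotient $N\barone/\Ob(\barone)$ is a circle, and then verify that the map matches generators. The only cosmetic difference is that the paper pins down the generator of the source by factoring through $N[1]/\{0,1\}=S^1$ via the inclusion $[1]\embed\barone$, whereas you extract a presentation of $\pi_1$ from the $2$-skeleton of the quotient; both reduce to the same degree-one check between aspherical spaces.
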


\begin{proof}
The category $[1]$ has two objects $0$ and $1$ and a unique nonidentity morphism $t:0\to 1$. Hence $\Ob(\barone)=\{0,1\}$ and $U(\bar{[1]})$ is the free group generated by $t$. Consider the following commutative diagram of simplicial sets:
\begin{diagram}
\frac{N\barone}{\{0,1\}} & \rTo^{N\eta_{\barone}} & NU(\barone) \\
\uEmbed<{j}                  &                        & \uTo>{g}      \\
\frac{N[1]}{\{0,1\}}    & \rEquals               & S^1.
\end{diagram}
Here $j$ is induced by $[1]\embed \barone$ while $g$ sends the unique nondegenerate 1-simplex of the simplicial circle $S^1$ to the 1-simplex $t$ of $\barone$.

To show that $N\eta_{\barone}$ is a weak homotopy equivalence, it suffices to show that $j$ and $g$ are both weak homotopy equivalences. First consider $j$. One one hand the standard 1-simplex $N[1]$ is contractible; on the other hand the equivalence of categories $\barone\simeq [0]$ implies the homotopy equivalence $N\barone\simeq N[0]$. This shows that both $N[1]$ and $N\barone$ are weakly contractible, hence $j$ is a weak homotopy equivalence.

Next consider $g$. The simplicial circle $S^1$ is a $K(\Z,1)$. Since $U(\barone)\isom \Z$ and $g$ sends the unique nondegenerate 1-simplex of $S^1$ to a generator of $U(\barone)$, then $g$ is a homotopy equivalence. This completes the proof.
\end{proof}

Let $\phi:M\to C$ be an equivalence of categories where $M$ has only one object $\bullet$. The essentially surjectivity of $\phi$ implies that any two objects of $C$ are isomorphic to $\phi(\bullet)$ and hence to each other. In particular, given a fixed object $x$ in $C$, there exists isomorphisms $c_y:x\to y$ for every object $y$ in $C$.

\begin{proposition} \label{prop: wedgesum decomposition of cat}
Fix a basepoint $x$ of a category $C$.

If $C$ is equivalent to a category with one object, then for each family $\{c_y:x\to y|\, y\in \Ob(C), y\neq
x\}$ of isomorphisms in $C$, then the inclusions $\End_C(x)\into C$ and $\bar{[c_y]}\into C$ induce an isomorphism of pointed categories:
\[
\End_C(x)\vee\bigvee_{y\neq x}\bar{[c_y]}\isom C.
\]
Here $\bar{[c_y]}\isom \barone$ is the category whose objects are $x$ and $y$ and whose nonidentity morphisms are $c_y$ and $c_y^{-1}$.
\end{proposition}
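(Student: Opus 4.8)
The plan is to produce the comparison functor from the universal property of the wedge sum and then to check by hand that it is an isomorphism. Recall that the wedge sum is the coproduct in $\PtCat$, so the basepoint-preserving inclusions $\End_C(x)\into C$ and $\bar{[c_y]}\into C$ assemble into a single pointed functor
\[
\Phi\co \End_C(x)\vee\bigvee_{y\neq x}\bar{[c_y]}\longrightarrow C .
\]
This coproduct is computed as the pushout in $\Cat$ that glues all the summands along their common basepoint $x$; in particular its object set is canonically $\{x\}\cup\{y\mid y\neq x\}=\Ob(C)$, and $\Phi$ is the identity on objects, hence a bijection on objects. It therefore remains only to prove that $\Phi$ is fully faithful, since an identity-on-objects fully faithful functor is an isomorphism of categories.

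The heart of the argument, and the step I expect to be the main obstacle, is to describe the morphisms of the wedge. These are composites of the generating morphisms of the summands --- elements of $\End_C(x)$ together with the arrows $c_y\co x\to y$ and $c_y^{-1}\co y\to x$ --- subject to the relations valid in each summand, in particular $c_y^{-1}c_y=\id_x$ and $c_yc_y^{-1}=\id_y$. The key observation is that each object $y\neq x$ occurs only in the single summand $\bar{[c_y]}$, whose only nonidentity morphisms are $c_y$ and $c_y^{-1}$. Hence if $y$ appears as an intermediate object of a composite, the arrow into it must be $c_y$ and the arrow out of it must be $c_y^{-1}$, and these cancel to $\id_x$; so in a reduced composite every intermediate object is the basepoint $x$. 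As the only morphisms $x\to x$ come from the summand $\End_C(x)$, where consecutive such morphisms multiply, I expect to reduce every morphism $a\to b$ of the wedge to the form $c_b\,g\,c_a^{-1}$ for some $g\in\End_C(x)$, with the convention $c_x=\id_x$.

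With this normal form in hand the two halves of full faithfulness should be short. For fullness, any $f\co a\to b$ in $C$ factors as $f=c_b\,(c_b^{-1}fc_a)\,c_a^{-1}$ with $c_b^{-1}fc_a\in\End_C(x)$, which is precisely the image under $\Phi$ of the wedge morphism $c_b\,(c_b^{-1}fc_a)\,c_a^{-1}$. For faithfulness, $\Phi$ carries the normal form $c_b\,g\,c_a^{-1}$ to the genuine composite $c_b\circ g\circ c_a^{-1}$ in $C$, so if two such images agree then cancelling the isomorphisms $c_a$ and $c_b$ forces the two values of $g$ to coincide, whence the two wedge morphisms were already equal. Thus $\Phi$ is fully faithful and identity-on-objects, hence an isomorphism of pointed categories. (It is worth recording along the way that composition matches, $(c_c g' c_b^{-1})(c_b g c_a^{-1})=c_c (g'g)c_a^{-1}$, which exhibits both sides as the single monoid $\End_C(x)$ spread out over the mutually isomorphic objects of $C$.)
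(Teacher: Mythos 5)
Your proof is correct, but it runs in the opposite direction from the paper's. The paper never describes the coproduct $\End_C(x)\vee\bigvee_{y\neq x}\bar{[c_y]}$ explicitly: it verifies that $C$, equipped with the inclusion functors, itself satisfies the universal property of the coproduct, by constructing for any test category $D$ and any compatible family $\alpha,\beta_y$ the unique induced functor $\gamma(d)=\beta_z(c_z)\circ\alpha(c_z^{-1}\circ d\circ c_y)\circ\beta_y(c_y^{-1})$ for $d\co y\to z$ (with $c_x=\id_x$). You instead take a concrete model of the coproduct, form the canonical comparison functor $\Phi$ into $C$, and prove it is bijective on objects and fully faithful; the substantive step is your normal form $c_b\,g\,c_a^{-1}$ for morphisms of the wedge, obtained by the word-reduction observation that any intermediate object $y\neq x$ must be entered by $c_y$ and left by $c_y^{-1}$. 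The two arguments hinge on the same factorization $d=c_b(c_b^{-1}dc_a)c_a^{-1}$, but yours requires knowing the presentation of a coproduct of categories by composable words modulo the summand relations and arguing that reduction terminates in the stated form, whereas the paper's universal-property check sidesteps any combinatorics of words at the cost of the (elided) verification that $\gamma$ is functorial and unique. Your route gives slightly more as a by-product --- an explicit description of $\Hom$-sets in the wedge, which makes the faithfulness and fullness checks transparent --- while the paper's is shorter to state and is the form actually consumed later (the isomorphism is only ever used via maps out of $C$).
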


\begin{proof}
Set $c_x$ as the identity morphism of $x$. We verify the universal property. Let $D$ be a category and functors $\alpha:\End_C(x)\to D$ and $\beta_y:\bar{[c_y]}\to D$ be given.
\begin{diagram}
\End_C(x)       &  \\
                &  \rdEmbed \rdTo(4,2)^{\alpha} & \\
                &                               & C & \rUnique^{\exists!\, \gamma}              & D  \\
   \bar{[c_y]}  & \ruEmbed(2,1) \ruEmbed(2,3)   &   & \ruTo(4,1)_{\beta_y} \ruTo(4,3)_{\beta_{y'}} & \\
   \vdots       &                               &   &                       & \\
 \bar{[c_{y'}]} &
\end{diagram}

Construct the functor $\gamma$ as follows. For any object $y\neq x$, define $\gamma(x)=\beta_y(y)$ (being a based functor $\gamma$ must send $x$ to the basepoint of $D$.) For any morphism $y\xrightarrow{d}z$ in $C$, define
\[
\gamma(d)=\beta_z(c_z)\circ \alpha(c_z^{-1}\circ d\circ c_y)\circ\beta_y(c_y^{-1}).
\]
It is routine to check that $\gamma$ is makes the required diagrams commute and is unique.
\end{proof}

\begin{lemma} \label{lem: thm in case of connected cat}
If a category $C$ is equivalent to a category with one object, then $N\eta_C:NC/\Ob(C)\to NU(C)$ is a weak homotopy equivalence.
\end{lemma}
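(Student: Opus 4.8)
The plan is to reduce the statement to the two cases already settled in Lemmas \ref{lem: nerve unit map of monoid} and \ref{lem: nerve unit map of barone} by decomposing $C$ as a wedge and comparing the unit maps factor by factor. Fix the basepoint $x$. Since $C$ is equivalent to a category with one object, Proposition \ref{prop: wedgesum decomposition of cat} supplies isomorphisms $c_y\co x\to y$ together with an isomorphism of pointed categories
\[
C\isom \End_C(x)\vee\bigvee_{y\neq x}\barcy .
\]
I would abbreviate $C_0=\End_C(x)$ and $C_y=\barcy\isom\barone$, so that $C\isom\bigvee_i C_i$. Because a pointed functor into a one-object category is just a functor, $U$ restricts to a left adjoint $\PtCat\to\Mon$ and so preserves coproducts; hence $U(C)\isom\Ast_i U(C_i)$, where $U(C_0)\isom\End_C(x)$ (the unit is an isomorphism for a one-object category) and $U(C_y)\isom\Z$ as computed in the proof of Lemma \ref{lem: nerve unit map of barone}.

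Next I would form the commutative square whose horizontal maps are induced by the inclusions $C_i\into C$ and $U(C_i)\to U(C)$:
\[
\begin{array}{ccc}
\bigvee_i NC_i/\Ob(C_i) & \xrightarrow{\ a\ } & NC/\Ob(C)\\[2pt]
\big\downarrow & & \big\downarrow\\[2pt]
\bigvee_i NU(C_i) & \xrightarrow{\ b\ } & NU(C),
\end{array}
\]
with left vertical $\bigvee_i N\eta_{C_i}$ and right vertical $N\eta_C$. The strategy is to show that $a$, $b$, and the left vertical are weak homotopy equivalences, whence $N\eta_C$ is one as well by the two-out-of-three property applied to $N\eta_C\circ a=b\circ(\bigvee_i N\eta_{C_i})$. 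The left vertical is immediate: on the summand $C_0$ it is $N\eta_{\End_C(x)}$, an isomorphism by Lemma \ref{lem: nerve unit map of monoid}, and on each summand $C_y$ it is $N\eta_{\barcy}$, a weak equivalence by Lemma \ref{lem: nerve unit map of barone}; a wedge of weak equivalences of simplicial sets is a weak equivalence. The bottom map $b$ is exactly the comparison $\bigvee_i NU(C_i)\to N(\Ast_i U(C_i))$ for the family of monoids $\{U(C_i)\}$, which is a weak equivalence by the result recalled in \eqref{para: dunno}.

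The main obstacle is the top map $a$. It is \emph{not} an isomorphism: a ``mixed'' chain such as $y\xrightarrow{c_y^{-1}}x\xrightarrow{c_{y'}}y'$ is a nondegenerate simplex of $NC/\Ob(C)$ that has no preimage in $\bigvee_i NC_i/\Ob(C_i)$, since no single factor $C_i$ contains morphisms from two distinct $\barcy$'s. I would instead prove $a$ is a weak equivalence by a comparison of cofiber sequences. Writing $NC_i/\Ob(C_i)=\mathrm{cofib}\bigl(\Ob(C_i)\into NC_i\bigr)$ and using that wedges of pointed simplicial sets commute with cofibers, the source of $a$ is $\mathrm{cofib}\bigl(\bigvee_i\Ob(C_i)\to\bigvee_i NC_i\bigr)$ and the target is $\mathrm{cofib}\bigl(\Ob(C)\into NC\bigr)$, with $a$ induced by the evident map between these two cofiber sequences. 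On the subobjects this map is the isomorphism $\bigvee_i\Ob(C_i)\isom\Ob(C)$, since the based object-sets glue at $x$ to the full object-set of $C$.

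It then remains to check that the comparison $\bigvee_i NC_i\to NC$ is a weak equivalence; this is the crux. Here I would observe that each $N\barcy=N\barone\simeq *$ is contractible (as $\barone\simeq[0]$, see \eqref{para: nerve}), so collapsing these well-pointed summands gives $\bigvee_i NC_i\simeq N\End_C(x)$, and under this identification the map to $NC$ is the nerve of the inclusion $\End_C(x)\into C$. That inclusion is fully faithful, and essentially surjective because every object $y$ is isomorphic to $x$ via $c_y$, so it is an equivalence of categories and its nerve is a weak equivalence by \eqref{para: nerve}. Since the inclusions $\Ob\into N(-)$ are cofibrations, both horizontal maps of the two cofiber sequences are weak equivalences, and therefore so is the induced map $a$ on cofibers. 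This completes the verification that three sides of the square are weak equivalences, and hence that $N\eta_C$ is a weak equivalence.
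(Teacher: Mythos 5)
Your proof is correct and follows essentially the same route as the paper: the same decomposition $C\isom\End_C(x)\vee\bigvee_{y\neq x}\barcy$ from Proposition \ref{prop: wedgesum decomposition of cat}, the same reduction to Lemmas \ref{lem: nerve unit map of monoid} and \ref{lem: nerve unit map of barone}, and the same use of the classifying-space/free-product comparison recalled in \ref{para: dunno}. If anything, your treatment of the top map $a$ (the paper's map $i$) is more careful than the paper's, which asserts it is a weak equivalence from $N\End_C(x)\simeq NC$ and the contractibility of the $N\barcy$ without spelling out the gluing argument on cofibers that you supply.
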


\begin{proof}
Fix a basepoint $x\in \Ob(C)$. By the remarks just before Proposition \ref{prop: wedgesum decomposition of cat}, the assumption that $C$ is equivalent to a category with one object implies the existence of an isomorphisms $c_y:x\to y$ for each object $y\neq x$.
Consider the commutative diagram of simplicial sets:
\begin{diagram}
\frac{NC}{\Ob(C)}                                                     & \rTo^{N\eta_{C}}                    & NU(C)                 \\
\uTo<{i}                                                            &                                     & \uTo>{NU(\iota)}             \\
                                                                       &                               & NU(\End_C(x)\vee\bigvee_{y\neq x}\barcy)\\
\frac{N\End_C(x)\wedgesum \bigvee_{y\neq x} N\barcy}{\{x\}\wedgesum \bigvee_{y\neq x}\{x,y\}} &          & \uTo>{N\psi}                 \\
\uTo<{f}                                                              &                      & N\left[U(\End_C(x))\ast\Ast_{y\neq x} U(\barcy)\right] \\
                                                                      &                                      & \uTo>{h}             \\
\frac{N\End_C(x)}{\{x\}} \wedgesum \bigvee_{y\neq x} \frac{N\barcy}{\{x,y\}} & \rTo^{N\eta_{\End_C(x)} \vee\bigvee N\eta_{\barcy}}
 & NU(\End_C(x))\vee\bigvee_{y\neq x} NU(\barcy)
\end{diagram}
To show that $N\eta_{C}$ is a weak homotopy equivalence, it suffices to show that every other map is a weak homotopy equivalence. We will describe each map in turn and prove that it is a weak homotopy equivalence.

The isomorphisms $\Ob(C)\isom \{x\}\vee\bigvee_{y \neq x}\{x,y\}$ and the inclusions $\End_C(x)\into C$ and $\barcy\into C$ induce the map $i$. As noted above, the assumption that $C$ is equivalent to a category with one object implies that any two objects of $C$ are isomorphic. Hence the fully faithful functor $\End_C(x)\into C$ is essentially surjective. In other words, there is an equivalence of categories $\End_C(x)\simeq C$ which gives a homotopy equivalence $N\End_C(x)\simeq NC$. Also $\barcy\isom [1]\simeq [0]$, which gives another homotopy equivalence $N\barcy\simeq N[0]=\pt$. Therefore $i$ is a weak homotopy equivalence.

The map $f$ is an isomorphism because the cokernel commutes with the wedge sum, both of them being colimits.

The map $N\eta_{\End_C(x)} \vee\bigvee_{y\neq x} N\eta_{\barcy}$ is a weak homotopy equivalence. This is because $N\eta_{\End_C(x)}$ is an isomorphism by Lemma \ref{lem: nerve unit map of monoid} and $N\eta_{\barcy}$ is a weak homotopy equivalence by Lemma \ref{lem: nerve unit map of barone}.

Recall that the nerve of monoids commutes with wedge sums, up to weak homotopy equivalence (see \ref{para: dunno}.) Since $U(\End_C(x))$ and $U(\barcy)$ are monoids, the map $h$ is a weak homotopy equivalence.

The functor $U$ is a left adjoint and thus commutes with colimits. Thus $\psi:U(\End_C(x))\ast\Ast_{y\neq x} U(\barcy)\to U(\End_C(x)\vee\bigvee_{y\neq x}\barcy)$ is an isomorphism and so is $N\psi$.

By Proposition \ref{prop: wedgesum decomposition of cat}, the functor $\iota:\End_C(x)\vee\bigvee_{y\neq x}\barcy\to C$ is an isomorphism. Hence $NU(\iota)$ is a isomorphism.
\end{proof}

\begin{proposition} \label{prop: unpointed coprod of cofiber}
Let $\{A_i\subset X_i\}$ be a collection where $A_i$ is a simplicial subset of $X_i$.  There is a isomorphism of simplicial sets:
$$\frac{\coprod X_i}{\coprod A_i}\isom \bigvee \frac{X_i}{A_i}.$$
\end{proposition}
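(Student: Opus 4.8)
The plan is to construct explicit mutually inverse simplicial maps using the universal properties of the cokernel (quotient) and of the wedge sum, both of which are colimits and are therefore computed dimensionwise in $\sSet$. Throughout, $X/A$ denotes the pointed simplicial set obtained by collapsing the subobject $A$ to a point, which serves as its basepoint.

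First I would build a map $\Phi\co \frac{\coprod X_i}{\coprod A_i}\to \bigvee X_i/A_i$. For each index $i$, compose the quotient $X_i\onto X_i/A_i$ with the canonical wedge inclusion $X_i/A_i\into \bigvee_j X_j/A_j$. Assembling these over the coproduct yields a map $\coprod X_i\to \bigvee X_j/A_j$ which sends every $A_i$ to the common basepoint. Hence it annihilates $\coprod A_i$ and factors uniquely through the quotient, defining $\Phi$.

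Conversely I would build $\Psi$ in the other direction. For each $i$ the composite $X_i\into \coprod_j X_j\onto \frac{\coprod_j X_j}{\coprod_j A_j}$ carries $A_i$ into $\coprod_j A_j$ and hence to the single basepoint, so it factors uniquely through $X_i/A_i$. Because all these factored maps send the basepoint of $X_i/A_i$ to the one basepoint of the target, they agree on basepoints and so, by the universal property of the wedge sum (the coproduct in pointed simplicial sets), assemble into a single pointed map $\Psi\co \bigvee X_i/A_i\to \frac{\coprod X_i}{\coprod A_i}$.

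Finally I would verify that $\Phi$ and $\Psi$ are mutually inverse. Since coproducts and cokernels in $\sSet$ are formed dimensionwise, it suffices to check this in each simplicial degree $n$, where everything reduces to the evident bijection of pointed sets $\bigl(\coprod_i (X_i)_n\bigr)\big/\bigl(\coprod_i (A_i)_n\bigr)\isom \bigvee_i \bigl((X_i)_n/(A_i)_n\bigr)$, matching a nonbasepoint element of the $i$-th summand on either side. The one point requiring care --- and the place where the statement would fail under a careless setup --- is that collapsing the \emph{single} subobject $\coprod A_i$ to one point simultaneously collapses each $A_i$ and identifies the resulting basepoints, which is exactly the effect of the wedge; once the two maps are seen to respect this common basepoint, being mutually inverse is routine.
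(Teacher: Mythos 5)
Your proof is correct, but it takes a more hands-on route than the paper. The paper's proof is a two-line formal argument: adjoin a disjoint basepoint to each $X_i$ and $A_i$ to form $X_i^+$ and $A_i^+$, observe that $\frac{\coprod X_i}{\coprod A_i}\isom \frac{\bigvee X_i^+}{\bigvee A_i^+}$ and $\frac{X_i}{A_i}\isom\frac{X_i^+}{A_i^+}$, and then conclude by the commutation of the cokernel with the wedge sum, both being colimits. You instead construct the two maps $\Phi$ and $\Psi$ explicitly from the universal properties of the quotient and of the wedge, and verify that they are mutually inverse by a dimensionwise check on pointed sets. Both arguments are sound; yours makes visible the one genuinely contentful point (collapsing the single subobject $\coprod A_i$ to one point both collapses each $A_i$ and identifies the resulting basepoints, which is exactly what the wedge does), whereas the paper's version hides that point inside the passage to $X_i^+$ and lets abstract nonsense finish the job. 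The paper's version is shorter and requires no element-level verification; yours is more elementary and, as a bonus, handles without further comment the degenerate case where some $A_i$ is empty (where $X_i/A_i$ must be read as $X_i^+$), a case the paper's reduction also covers but does not mention.
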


\begin{proof}
Define the pointed simplicial set $X_i^+$ by adjoining a disjoint basepoint to $X_i$. Let $A_i^+$ be the pointed simplicial subset of $X_i^+$ that contains $A_i$ and this disjoint basepoint. The cokernel and the wedge sum commutes, both being colimits:
$$\frac{\bigvee X_i^+}{\bigvee A_i^+}\isom \bigvee \frac{X_i^+}{A_i^+}.$$
But $\frac{\coprod X_i}{\coprod A_i}$ is isomorphic to $\frac{\bigvee X_i^+}{\bigvee A_i^+}$ and $\bigvee \frac{X_i}{A_i}$ is isomorphic to $\bigvee \frac{X_i^+}{A_i^+}$, so this completes the proof.
\end{proof}

\begin{lemma} \label{lem: thm in case of one level cat}
If the category $C$ is equivalent to a totally disconnected category,
then $N\eta_C:NC/\Ob(C)\to NU(C)$ is a weak homotopy equivalence.
\end{lemma}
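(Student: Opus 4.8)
The plan is to reduce to the connected case already settled in Lemma \ref{lem: thm in case of connected cat} by splitting $C$ into its connected components. Write $C\isom\coprod_i C_i$ for the decomposition of $C$ into connected components. The key observation is that each $C_i$ is itself equivalent to a category with one object: an equivalence $C\simeq D$ with $D$ totally disconnected induces a bijection on connected components and restricts to an equivalence between corresponding components, and each component of $D$ is a connected category with a single object. Hence each $C_i$ is equivalent to a one-object category, so Lemma \ref{lem: thm in case of connected cat} applies and $N\eta_{C_i}\co NC_i/\Ob(C_i)\to NU(C_i)$ is a weak homotopy equivalence for every $i$.

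Next I would assemble these componentwise equivalences into the global map $N\eta_C$. On the source side, the nerve commutes with coproducts and $\Ob$ commutes with coproducts (see \ref{para: nerve}, \ref{para: small cat}), so $NC\isom\coprod_i NC_i$ and $\Ob(C)\isom\coprod_i\Ob(C_i)$; Proposition \ref{prop: unpointed coprod of cofiber} then gives an isomorphism $NC/\Ob(C)\isom\bigvee_i\bigl(NC_i/\Ob(C_i)\bigr)$. On the target side, $U$ is a left adjoint and so commutes with coproducts, and the coproduct of monoids is the free product, giving $U(C)\isom\Ast_i U(C_i)$; the fact that the nerve of monoids commutes with wedge sums up to weak homotopy equivalence (see \ref{para: dunno}) yields a weak homotopy equivalence $\bigvee_i NU(C_i)\to N(\Ast_i U(C_i))\isom NU(C)$.

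These fit into a commutative square
\begin{diagram}
\frac{NC}{\Ob(C)}                     & \rTo^{N\eta_C}                  & NU(C)                  \\
\uTo<{\isom}                          &                                 & \uTo                   \\
\bigvee_i \frac{NC_i}{\Ob(C_i)}       & \rTo^{\bigvee_i N\eta_{C_i}}    & \bigvee_i NU(C_i)
\end{diagram}
whose left vertical map is the isomorphism above and whose right vertical map is the weak homotopy equivalence above; the bottom map is a weak homotopy equivalence since each $N\eta_{C_i}$ is and a wedge of weak homotopy equivalences of pointed simplicial sets is again one (the basepoint inclusions being cofibrations in $\sSet$, exactly as used in Lemma \ref{lem: thm in case of connected cat}). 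Once the square is known to commute, three of its four edges are weak homotopy equivalences, so the fourth edge $N\eta_C$ is one as well, which is the claim.

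The main obstacle is the bookkeeping needed to see that the square genuinely commutes: one must check that the canonical isomorphisms $NC/\Ob(C)\isom\bigvee_i NC_i/\Ob(C_i)$ and $NU(C)\isom N(\Ast_i U(C_i))$ are compatible with $N\eta_C$ and with the componentwise unit maps $N\eta_{C_i}$. This follows from the naturality of the unit $\eta$ with respect to the component inclusions $C_i\into C$ together with the fact that all the identifications are induced by these same inclusions, but it is the part that requires care. The verification that the restriction of the original equivalence to each connected component is again an equivalence is routine, and I would only sketch it.
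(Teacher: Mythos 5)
Your proposal is correct and follows essentially the same route as the paper: decompose $C$ into connected components, apply Lemma \ref{lem: thm in case of connected cat} to each, identify the source via Proposition \ref{prop: unpointed coprod of cofiber} and the commutation of $N$ and $\Ob$ with coproducts, identify the target via $U$ being a left adjoint together with the wedge-sum property of nerves of monoids from \ref{para: dunno}, and conclude from the commutative square. The only cosmetic difference is that the paper displays the two vertical identifications as separate maps $f,g$ and $h,k$ in a three-row diagram rather than composing them.
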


\begin{proof}
Decompose $C=\coprod_i A_i$ into its connected components. The assumption that $C$ is equivalent to a totally disconnected category implies that each connected component $A_i$ is equivalent to a category with one object. Thus Lemma \ref{lem: thm in case of connected cat} gives weak homotopy equivalences $N\eta_{A_i}:NA_i/\Ob(A_i)\to NU(A_i)$ for all $i$. Consider the following commutative diagram:
\begin{diagram}
\frac{N(\coprod A_i)}{\Ob(\coprod A_i)} & \rTo^{N\eta_C}               & NU(\coprod A_i) \\
\uTo<{f}                                 &                              & \uTo>{k}            \\
\frac{\coprod NA_i}{\coprod \Ob(A_i)}   &                              & N(\Ast U(A_i))     \\
\uTo<{g}                                 &                              & \uTo>{h}             \\
\bigvee \frac{NA_i}{\Ob(A_i)}           & \rTo^{\bigvee N\eta_{A_i}}   & \bigvee NU(A_i).
\end{diagram}

To show the required weak homotopy equivalence $N\eta_C$, it suffices to show that every other map is a weak homotopy equivalence. Since $\Ob$ and $N$ commute with coproducts (see \ref{para: small cat} and \ref{para: nerve} respectively,) the map $f$ is an isomorphism. The map $g$ is an isomorphism by Proposition \ref{prop: unpointed coprod of cofiber}. The map $h$ is a weak homotopy equivalence (see \ref{para: dunno}.) The map $k$ is an isomorphism since $U$ is a left adjoint and hence commutes with coproducts.
\end{proof}

\begin{proposition}
Let $C$ be a simplicial category. If $C_n$ is equivalent to a totally disconnected category for all $n$, then $BU(C)$ is the homotopy cofiber of $B\! \Ob(C)\to BC$.
\end{proposition}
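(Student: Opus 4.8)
The plan is to bootstrap the single-category statement of Lemma \ref{lem: thm in case of one level cat} to the simplicial setting by working at the level of bisimplicial sets and then passing to geometric realizations. The essential categorical content has already been established in Lemma \ref{lem: thm in case of one level cat}; what remains is the ``assembly'' of the dimensionwise statements together with a standard realization argument.

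First I would observe that applying the nerve dimensionwise to the simplicial category $C$ and to its associated simplicial monoid $U(C)$ produces bisimplicial sets $NC$ and $NU(C)$, and that the unit maps of \eqref{eq: THE final map} assemble, by naturality of the adjunction unit $\eta$ and functoriality of $N$ and of the quotient, into a map of bisimplicial sets $N\eta_C\co NC/\Ob(C)\to NU(C)$. In each simplicial degree $n$ this map restricts to $N\eta_{C_n}\co NC_n/\Ob(C_n)\to NU(C_n)$. By hypothesis every $C_n$ is equivalent to a totally disconnected category, so Lemma \ref{lem: thm in case of one level cat} applies to show that each $N\eta_{C_n}$ is a weak homotopy equivalence of simplicial sets. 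Hence $N\eta_C$ is a pointwise weak homotopy equivalence of bisimplicial sets, and by the result recalled in \ref{para: sset} its geometric realization $|N\eta_C|\co |NC/\Ob(C)|\to |NU(C)|=BU(C)$ is a homotopy equivalence of CW complexes.

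It then remains to identify the source $|NC/\Ob(C)|$ with the homotopy cofiber of $B\Ob(C)\to BC$. Using the isomorphism $N\Ob(C)\isom \Ob(C)$ established earlier in this section (the nerve of a discrete category is the underlying set), the inclusion $\Ob(C)\into NC$ is a levelwise monomorphism of bisimplicial sets. Since the diagonal functor commutes with colimits and geometric realization of simplicial sets both commutes with colimits and carries monomorphisms to CW inclusions, we obtain $|NC/\Ob(C)|\isom |NC|/|\Ob(C)|=BC/B\Ob(C)$ as a strict cofiber along a cofibration. Because $B\Ob(C)\into BC$ is a cofibration, this strict cofiber computes the homotopy cofiber, and composing with the homotopy equivalence of the previous paragraph yields that $BU(C)$ is the homotopy cofiber of $B\Ob(C)\to BC$, as claimed.

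The step I expect to require the most care is this last identification: one must ensure that the geometric realization of a bisimplicial set, which is defined via its diagonal, genuinely commutes with the quotient $NC/\Ob(C)$ and that the resulting strict cofiber agrees up to homotopy with the mapping cone. This hinges on the inclusion being a cofibration after realization, which is exactly why I would emphasize that $\Ob(C)\into NC$ is a levelwise monomorphism. The remainder of the argument is formal, being a routine combination of Lemma \ref{lem: thm in case of one level cat} with the pointwise-equivalence criterion for bisimplicial sets.
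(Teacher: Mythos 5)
Your proposal is correct and follows essentially the same route as the paper: apply Lemma \ref{lem: thm in case of one level cat} dimensionwise to see that $N\eta_C\co NC/\Ob(C)\to NU(C)$ is a pointwise weak equivalence of bisimplicial sets, realize it using the criterion of \ref{para: sset}, and identify $|NC/\Ob(C)|$ with the homotopy cofiber of $B\!\Ob(C)\to BC$. The only difference is that you spell out the cofibration justification for the last identification in more detail than the paper, which simply asserts it.
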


\begin{proof}
By the remarks before Lemma \ref{lem: nerve unit map of monoid}, for any set $S$, there is a isomorphism $NS\isom S$. For a simplicial set $X$, this gives a pointwise isomorphism $NX\isom X$ of bisimplicial sets, hence an isomorphism $BX=|NX|\isom |X|$ of CW complexes.

Now, the bisimplicial set $NC/\Ob(C)$ is the homotopy cofiber of $\Ob(C)\to NC$, hence $|NC/\Ob(C)|$ is the homotopy cofiber of $|\Ob(C)|\to |NC|=BC$. The previous paragraph gives an isomorphism $B\!\Ob(C)\isom |\Ob(C)|$, therefore $|NC/\Ob(C)|$ is also the homotopy cofiber of $B\!\Ob(C)\to BC$.

By Lemma \ref{lem: thm in case of one level cat}, the assumption that each $C_n$ is equivalent to a totally disconnected category implies that the map of bisimplicial sets $N\eta_C: NC/\Ob(C)\to NU(C)$ is a pointwise weak homotopy equivalence. Taking geometric realization, we obtain a homotopy equivalence $|N\eta_C|: |NC/\Ob(C)|\simeq |NU(C)|=BU(C)$ of CW complexes (see \ref{para: sset}.) Therefore $BU(C)$ is the homotopy cofiber of $B\!\Ob(C)\to BC$.
\end{proof}

\section{The Reduced Version}

We now move on to give a reduced version of the above. Recall from \eqref{eq: red univ monoid def}, that the reduced universal monoid $U[C]$ on a pointed category $C$ is defined to be $\coker(U(\End_C(\pt) ) \to U(C))$ . The reduced monoid $U[C]$ can also be presented by generators and relations:
\[
\frac{J \left[\Mor(C)/\End_C(\pt)\right]}{\genby{\forall X\in \Ob(C)\, (1_X\sim 1), \forall f,g\in \Mor(C) \, (t(f)=s(g)
\implies g\circ f\sim f\cdot g \ )}}.
\]

\begin{proposition} \label{prop: univ prop of red universal mon}
Let $C$ be a pointed category. The reduced universal monoid $U[C]$ has the universal property:
\begin{diagram}
C & \rTo & U[C] \\
 & \rdTo<{\psi} & \dUnique>{\exists!\tilde{\psi}} \\
 &              & M
\end{diagram}
Given a functor $\psi: C\to M$ to a monoid $M$, viewed as a category with one object, such that $\psi$ sends every endomorphism of $\pt$ to
$1_M$, there exists a unique monoid homomorphism
$\tilde{\psi}:U[C]\to M$ such that the above diagram commutes. Here $C\to U[C]$ is the composite $C\xrightarrow{\eta_C} U(C)\to U[C]$.
\end{proposition}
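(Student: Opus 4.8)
The plan is to produce $\tilde{\psi}$ by a two-stage factorization that simply glues together two universal properties. First I would push $\psi$ across the unit $\eta_C$ using the adjunction $U\dashv(\Mon\into\Cat)$, obtaining a homomorphism on $U(C)$; then I would descend that homomorphism through the defining cokernel $U[C]=\coker(U(\End_C(\pt))\to U(C))$. Uniqueness at each stage will assemble into the global uniqueness.

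For the first stage, note that $M$, viewed as a one-object category, lies in the image of the fully faithful inclusion $\Mon\into\Cat$, so the adjunction $U\dashv(\Mon\into\Cat)$ gives a bijection between functors $C\to M$ and monoid homomorphisms $U(C)\to M$, implemented by precomposition with $\eta_C$. Applying this to $\psi$ yields a unique monoid homomorphism $\hat{\psi}\co U(C)\to M$ with $\hat{\psi}\circ\eta_C=\psi$.

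For the second stage I would invoke the universal property of the cokernel in $\Mon$: a homomorphism $\hat{\psi}\co U(C)\to M$ factors, necessarily uniquely, through the quotient $q\co U(C)\to U[C]$ exactly when the composite $U(\End_C(\pt))\xrightarrow{U(\iota)}U(C)\xrightarrow{\hat{\psi}}M$ is trivial, where $\iota\co\End_C(\pt)\into C$ is the inclusion. Verifying this triviality is the crux of the argument, and the one place where the hypothesis on $\psi$ is actually used. I would appeal to naturality of $\eta$, which gives $U(\iota)\circ\eta_{\End_C(\pt)}=\eta_C\circ\iota$, so that
\[
\hat{\psi}\circ U(\iota)\circ\eta_{\End_C(\pt)}=\hat{\psi}\circ\eta_C\circ\iota=\psi\circ\iota,
\]
and by hypothesis $\psi\circ\iota$ sends every endomorphism of $\pt$ to $1_M$, i.e.\ is trivial. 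Since $\End_C(\pt)$ has a single object, the unit $\eta_{\End_C(\pt)}$ is an isomorphism (as recorded in the proof of Lemma \ref{lem: nerve unit map of monoid}), so every element of $U(\End_C(\pt))$ is of the form $\eta_{\End_C(\pt)}(e)$; hence $\hat{\psi}\circ U(\iota)$ is trivial on all of $U(\End_C(\pt))$, as required.

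Finally I would assemble the pieces. The cokernel property produces the desired $\tilde{\psi}\co U[C]\to M$ with $\tilde{\psi}\circ q=\hat{\psi}$, whence $\tilde{\psi}\circ(C\to U[C])=\tilde{\psi}\circ q\circ\eta_C=\hat{\psi}\circ\eta_C=\psi$, so the triangle commutes. For uniqueness, any competing $\tilde{\psi}'$ making the triangle commute satisfies $(\tilde{\psi}'\circ q)\circ\eta_C=\psi$, so $\tilde{\psi}'\circ q=\hat{\psi}$ by uniqueness in the first stage, and then $\tilde{\psi}'=\tilde{\psi}$ by uniqueness of the cokernel factorization. The only genuine obstacle is the triviality check in the second stage; everything else is the formal concatenation of the two universal properties.
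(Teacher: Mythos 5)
Your argument is correct, and it is precisely the routine concatenation of the two universal properties that the paper intends: the paper states this proposition without proof, treating it as immediate from the definition $U[C]=\coker\bigl(U(\End_C(\pt))\to U(C)\bigr)$ and the adjunction $U\dashv(\Mon\into\Cat)$. Your identification of the one nontrivial point --- that $\hat{\psi}\circ U(\iota)$ is trivial because $\eta_{\End_C(\pt)}$ is an isomorphism (the unit is invertible on one-object categories, as used in the proof of Lemma \ref{lem: nerve unit map of monoid}) and $\psi\circ\iota$ is trivial by hypothesis --- is exactly the right place to spend the effort, so nothing is missing.
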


By the remarks before Lemma \ref{lem: nerve unit map of monoid}, for a category $C$, the unit map $\eta_C:C\to U(C)$ induces a simplicial map $N\eta_C:NC/\Ob(C)\to U(C)$. If $C$ is pointed, the composite $C\xrightarrow{\eta_C}U(C)\to U[C]$ sends every endomorphism of the basepoint to the identity, hence $N\eta_C:NC/\Ob(C)\to U(C)$ factors through $\frac{NC/N\End_C(\pt)}{\Ob(C)}$. By further abuse of notation, we will still call the induced map $N\eta_C:\frac{NC/N\End_C(\pt)}{\Ob(C)}\to NU[C]$.

\begin{lemma} \label{lem: red thm in case of connected cat}
Let $C$ be a pointed category. If $C$ is equivalent to a category with just one object, then $N\eta_C:\frac{NC/N\End_C(\pt)}{\Ob(C)}\to NU[C]$ is a weak homotopy equivalence.
\end{lemma}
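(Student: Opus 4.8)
The plan is to deduce this reduced statement from its unreduced counterpart, Lemma \ref{lem: thm in case of connected cat}, rather than to rebuild the whole wedge-decomposition diagram from scratch. The passage from $U(C)$ to $U[C]$ is precisely the operation of killing the endomorphism monoid $\End_C(\pt)$, and on nerves this collapse should be modelled by a cofiber. So I would exhibit both the source and the target of the reduced $N\eta_C$ as cofibers of the maps induced by $\End_C(\pt)\into C$, and then compare the two cofiber sequences using the gluing lemma for weak homotopy equivalences in $\sSet$.

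First I would fix $x=\pt$ and extract the algebraic input. By Proposition \ref{prop: wedgesum decomposition of cat}, a choice of isomorphisms $c_y\co\pt\to y$ gives $C\isom \End_C(\pt)\vee\bigvee_{y\neq\pt}\barcy$; since $U$ is a left adjoint it carries this to the free product $U(C)\isom U(\End_C(\pt))\ast\Ast_{y\neq\pt}U(\barcy)$, in which $U(\End_C(\pt))$ is a free factor. Taking the cokernel that defines $U[C]$ kills this factor, so $U[C]\isom\Ast_{y\neq\pt}U(\barcy)$. Consequently, using that the nerve of a free product is weakly equivalent to the wedge of the nerves (see \ref{para: dunno}), the natural map
\[
NU(C)/NU(\End_C(\pt))\to NU[C]
\]
is, up to this free-product/wedge equivalence, the collapse of the summand $NU(\End_C(\pt))$, hence a weak homotopy equivalence.

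Next I would assemble a map of cofiber sequences of simplicial sets. The top sequence is $N\End_C(\pt)/\{\pt\}\to NC/\Ob(C)\to \frac{NC/N\End_C(\pt)}{\Ob(C)}$, whose third term is genuinely the cofiber, since collapsing the image of $N\End_C(\pt)$ and then $\Ob(C)$ amounts to collapsing $N\End_C(\pt)\cup\Ob(C)$ inside $NC$. The bottom sequence is $NU(\End_C(\pt))\to NU(C)\to NU(C)/NU(\End_C(\pt))$, and the vertical comparison maps are the unit maps $N\eta$, which commute with the horizontal maps by naturality of $\eta$ along $\End_C(\pt)\into C$. Both left-hand horizontal maps are monomorphisms, hence cofibrations. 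The left vertical map is $N\eta_{\End_C(\pt)}$, an isomorphism by Lemma \ref{lem: nerve unit map of monoid} because $\End_C(\pt)$ has one object; the middle vertical map is $N\eta_C$, a weak homotopy equivalence by Lemma \ref{lem: thm in case of connected cat}. The gluing lemma (the cofiber of a weak homotopy equivalence between inclusions of simplicial sets is again a weak homotopy equivalence) then shows that the induced map of cofibers $\frac{NC/N\End_C(\pt)}{\Ob(C)}\to NU(C)/NU(\End_C(\pt))$ is a weak homotopy equivalence. Composing with the equivalence $NU(C)/NU(\End_C(\pt))\to NU[C]$ from the previous paragraph, and noting that this composite is exactly the reduced unit map $N\eta_C$ induced by $C\to U(C)\to U[C]$, completes the argument.

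The main obstacle I anticipate is the bottom row: verifying that the simplicial cofiber $NU(C)/NU(\End_C(\pt))$ really models $NU[C]$. The nerve functor does not send the monoid cokernel to a topological cofiber in general, so this identification cannot be made formally; it rests essentially on the free-product decomposition of $U(C)$ together with the fact that the nerve of a free product splits, up to weak equivalence, as a wedge (\ref{para: dunno}). Concretely, one must check that the weak equivalence of \ref{para: dunno} carries the subcomplex $NU(\End_C(\pt))$ to the corresponding wedge summand, so that collapsing it is compatible with the algebraic quotient $U(C)\to U[C]$; granting this, a second application of the gluing lemma to the wedge decomposition closes the gap. Everything else is formal naturality and the two cited lemmas.
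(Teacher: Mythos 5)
Your proof is correct, but it takes a genuinely different route from the paper's. The paper reproves the reduced statement from scratch: it runs essentially the same large commutative diagram as in the unreduced Lemma \ref{lem: thm in case of connected cat}, with the $\End_C(\pt)$-summand deleted from the decomposition of Proposition \ref{prop: wedgesum decomposition of cat}, and checks each comparison map separately. You instead deduce the reduced case from the unreduced one by comparing the two cofiber sequences along the unit maps and applying the gluing lemma, and then separately identifying $NU(C)/NU(\End_C(\pt))$ with $NU[C]$. Both arguments rest on the same two inputs --- the isomorphism $C\isom\End_C(\pt)\vee\bigvee_{y\neq\pt}\barcy$ and the wedge/free-product comparison of \ref{para: dunno} --- but your factorization is more modular. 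The point you flag as the main obstacle does close exactly as you suggest: the weak equivalence $\bigvee NM_i\to N(\Ast M_i)$ of \ref{para: dunno} restricts on the summand $NU(\End_C(\pt))$ to the nerve of the free-factor inclusion, so a second gluing-lemma application (with identity on the left) identifies $NU(C)/NU(\End_C(\pt))$ with $\bigvee_{y\neq\pt}NU(\barcy)\simeq N\bigl(\Ast_{y\neq\pt}U(\barcy)\bigr)\isom NU[C]$, and the resulting composite is the reduced unit map by naturality of $\eta$. Two details worth recording explicitly: the map $U(\End_C(\pt))\to U(C)$ is injective precisely because it is a free-factor inclusion, so both left-hand horizontal maps really are monomorphisms as your gluing-lemma applications require; and the gluing lemma is nowhere stated in the paper's preliminaries, so it needs a citation (it holds in $\sSet$ since all objects are cofibrant and $\sSet$ is left proper). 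The trade-off is that the paper's proof is self-contained diagram-checking, while yours buys a cleaner reduction of the reduced case to the unreduced one at the price of one extra, standard, model-categorical input.
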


\begin{proof}
Call the basepoint of $C$ as $x$. Since $C$ is equivalent to a one-object category, we may choose isomorphisms $c_y:x\to y$ in $C$ for each object $y\neq x$.
Consider the commutative diagram:
\begin{diagram}
\frac{NC/N\End_C(x)}{\Ob(C)}                                        & \rTo^{N\eta_{C}}                    & NU[C]                 \\
\uTo<{i}                                                            &                                     & \uTo>{NU(\phi)}             \\
                                                                       &                                     & \\
\frac{\bigvee_{y\neq x} N\barcy}{\bigvee_{y\neq x}\{x,y\}}          &                                     & NU(\Ast_{y\neq x} \barcy)                \\
\uTo<{f}                                                              &                                      & \\
                                                                      &                                      & \uTo>{h}             \\
\bigvee_{y\neq x} \frac{N\barcy}{\{x,y\}}                          &  \rTo^{\bigvee_{y\neq x} N\eta_{\barcy}} & \bigvee_{y\neq x} NU(\barcy).
\end{diagram}
To show that $N\eta_{C}$ is a weak homotopy equivalence, it suffices to show that every other map is a weak homotopy equivalence. We will describe each map in turn and prove that it is a weak homotopy equivalence.

The isomorphism $\Ob(C)\isom \bigvee_{y \neq x}\{x,y\}$ and the inclusions $\barcy\into C$ induce the map $i$. As before, the assumption that $C$ is equivalent to a category with just one object implies the equivalence of categories $\End_C(x)\simeq C$. Thus $N\End_C(x)\simeq NC$ so that $NC/N\End_C(x)\simeq \pt$. Also the equivalence of categories $\barcy\isom\barone\simeq [0]$ implies $N\barcy\simeq  N[0]=\pt$. Therefore $i$ is a weak homotopy equivalence.

The map $f$ is the isomorphism of simplicial sets since the cokernel and the wedge sum commute, both being colimits.

The map $\bigvee_{y\neq x} N\eta_{\barcy}$ is a weak homotopy equivalence since each $N\eta_{\barcy}$ is a weak homotopy equivalence by Lemma \ref{lem: nerve unit map of barone}.

The map $h$ is is the following composite
$$\bigvee_{y\neq x} NU(\barcy)\to N(\Ast_{y\neq x} U(\barcy))\to NU(\Ast_{y\neq x} \barcy).$$
The first map is a weak homotopy equivalence since the nerve of monoids commutes with wedge sum up to weak equivalence (see \eqref{para: dunno}.) The second map is an isomorphism since $U$, being a left adjoint, commutes with colimits. Therefore $h$ is a weak homotopy equivalence.

The functor $\phi$ is induced by the inclusions $\barcy\into C$. These inclusions and $\End_C(x)\into C$ induces an isomorphism $\End_C(x)\vee\bigvee_{y\neq x}\bar{[c_y]}\isom C$ by Proposition \ref{prop: wedgesum decomposition of cat}. Since $U$ commutes with colimits, this gives $U(\End_C(x))\ast \Ast_{y\neq x} U(\barcy)\isom U(C)$. But $U[C]=\coker(U(\End_C(x))\to U(C))$, hence $\phi:\Ast_{y\neq x} U(\barcy)\to U[C]$ is an isomorphism. Therefore $N\phi$ is an isomorphism of simplicial sets.
\end{proof}

\begin{proposition} \label{prop: how red U commute with coprod}
Let $C$ be a pointed category and let $C=A_0\sqcup\coprod A_i$ be a decomposition of $C$ into its connected components where $A_0$ contains the basepoint. Then there is a monoid isomorphism:
$$U[C]\isom U[A_0]\ast\Ast U(A_i).$$
\end{proposition}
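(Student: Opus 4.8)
The plan is to identify $U[A_0]\ast\Ast U(A_i)$ as a monoid enjoying the same universal property as $U[C]$, and then conclude by the Yoneda lemma. The starting observation is that the basepoint $\pt$ lies in the component $A_0$, and since no morphism of $C$ connects distinct components, every endomorphism of $\pt$ already lives in $A_0$; that is, $\End_C(\pt)=\End_{A_0}(\pt)$. Consequently the reduction in the definition $U[C]=\coker(U(\End_C(\pt))\to U(C))$ only ever interacts with the $A_0$-part of $C$, which is precisely why the remaining components should contribute their \emph{unreduced} universal monoids $U(A_i)$.

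First I would unwind the universal property of the proposed target. Since $\ast$ is the coproduct in $\Mon$ (see \ref{para: dunno}), a monoid homomorphism $U[A_0]\ast\Ast U(A_i)\to M$ is the same datum as a homomorphism $U[A_0]\to M$ together with homomorphisms $U(A_i)\to M$ for each $i$. By Proposition \ref{prop: univ prop of red universal mon} the first of these is a functor $A_0\to M$ sending every endomorphism of $\pt$ to $1_M$, while by the adjunction defining $U$ each of the latter is simply a functor $A_i\to M$.

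Next I would reassemble these across the decomposition $C=A_0\sqcup\coprod A_i$. A functor out of a coproduct of categories is exactly a family of functors out of the summands, so the combined datum above is precisely a functor $\psi\co C\to M$ whose restriction to $A_0$ kills $\End_{A_0}(\pt)$. Using $\End_C(\pt)=\End_{A_0}(\pt)$, this is the same as a functor $C\to M$ sending every endomorphism of $\pt$ to $1_M$, which by Proposition \ref{prop: univ prop of red universal mon} is exactly a monoid homomorphism $U[C]\to M$. Thus $U[C]$ and $U[A_0]\ast\Ast U(A_i)$ corepresent the same functor $\Mon\to\Set$, and the Yoneda lemma yields the asserted isomorphism, which one checks is induced by the evident inclusions of components.

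The point requiring care, rather than a genuine obstacle, is the bookkeeping that localizes the reduction to the single component $A_0$: one must verify that the basepoint-endomorphism condition of Proposition \ref{prop: univ prop of red universal mon} sits entirely in $A_0$ and imposes no condition on the functors out of the remaining $A_i$. An alternative route avoids Yoneda and works directly with the defining cokernel: since $U$ is a left adjoint it carries the coproduct $C=A_0\sqcup\coprod A_i$ to the free product $U(C)\isom U(A_0)\ast\Ast U(A_i)$, under which the map $U(\End_C(\pt))\to U(C)$ factors through the first factor $U(A_0)$. One then invokes the general fact that for a map into a free product landing in a single factor one has $\coker(G\to H_0\ast K)\isom\coker(G\to H_0)\ast K$ — itself proved by the same universal-property comparison — giving $U[C]\isom U[A_0]\ast\Ast U(A_i)$ at once.
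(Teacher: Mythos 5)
Your argument is correct, and your closing ``alternative route'' is in fact the paper's own proof almost verbatim: the paper observes $\End_C(\pt)=\End_{A_0}(\pt)$, uses that $U$ is a left adjoint to get $U(C)\isom U(A_0)\ast\Ast U(A_i)$, and then silently invokes exactly the fact you isolate, namely $\coker(G\to H_0\ast K)\isom\coker(G\to H_0)\ast K$ for a map landing in a single free factor. Your primary route --- showing that $U[C]$ and $U[A_0]\ast\Ast U(A_i)$ corepresent the same functor $\Mon\to\Set$ by unwinding $\Hom$ out of a free product, the adjunction defining $U$, the universal property of Proposition \ref{prop: univ prop of red universal mon}, and the fact that a functor out of a coproduct of categories is a family of functors out of the summands --- is a genuinely different (and arguably more self-contained) presentation: it supplies the justification for the cokernel-of-free-product step that the paper takes for granted, at the cost of a Yoneda argument where one should note that all the bijections are natural in $M$. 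The one point you rightly flag as needing care, that the basepoint-endomorphism condition imposes no constraint on the functors out of the $A_i$ for $i\neq 0$, is handled identically in both proofs via $\End_C(\pt)=\End_{A_0}(\pt)$.
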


\begin{proof}
Note that $\End_C(\pt)=\End_{A_0}(\pt)$ as $A_0$ contains the basepoint. Hence
\begin{align}
&\coker(U(\End_{C}(\pt))\to [U(A_0)\ast\Ast U(A_i)]) \label{eq: what should be U(C)}\\
\isom {} &\coker(U(\End_{A_0}(\pt))\to U(A_0))\ast\Ast U(A_i). \notag\\
= {} &U[A_0]\ast\Ast U(A_i) \notag
\end{align}
But $U(C)=U(A_0\sqcup\coprod A_i)\isom U(A_0)\ast\Ast U(A_i)$ since $U$ is a left adjoint and hence commutes with coproducts. Thus \eqref{eq: what should be U(C)} is just $\coker(U(\End_C(\pt) ) \to U(C))=U[C]$, which completes the proof.
\end{proof}

\begin{lemma} \label{lem: red thm in case of one level cat}
Let $C$ be a pointed category. If $C$ is equivalent to a totally disconnected category,
then $N\eta_C:\frac{NC/N\End_C(\pt)}{\Ob(C)}\to NU[C]$ is a weak homotopy equivalence.
\end{lemma}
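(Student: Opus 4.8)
The plan is to mirror the proof of the unreduced Lemma \ref{lem: thm in case of one level cat}, decomposing $C$ into connected components and reducing to the connected case, except that now the basepoint component must be singled out and treated with the \emph{reduced} connected lemma while every other component is treated with the unreduced one. Concretely, I would write $C = A_0 \sqcup \coprod_{i\geq 1} A_i$ for the decomposition into connected components, with $A_0$ the component containing the basepoint $\pt$. Since $C$ is equivalent to a totally disconnected category, each $A_i$ is equivalent to a one-object category. Because $A_0$ contains the basepoint, $\End_C(\pt) = \End_{A_0}(\pt)$, so Lemma \ref{lem: red thm in case of connected cat} applies to $A_0$ and yields a weak equivalence $N\eta_{A_0}\co \frac{NA_0/N\End_{A_0}(\pt)}{\Ob(A_0)} \to NU[A_0]$; for each $i\geq 1$ the basepoint endomorphisms play no role, so Lemma \ref{lem: thm in case of connected cat} applies and gives weak equivalences $N\eta_{A_i}\co \frac{NA_i}{\Ob(A_i)} \to NU(A_i)$.

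Next I would assemble the target. By Proposition \ref{prop: how red U commute with coprod} there is an isomorphism $U[C] \isom U[A_0]\ast\Ast_{i\geq 1} U(A_i)$, hence $NU[C] \isom N(U[A_0]\ast\Ast U(A_i))$; and since the nerve of monoids carries free products to wedge sums up to weak equivalence (\ref{para: dunno}), the canonical map $NU[A_0]\vee\bigvee_{i\geq 1} NU(A_i) \to N(U[A_0]\ast\Ast U(A_i))$ is a weak equivalence. On the source side the key identity is $\frac{NC/N\End_C(\pt)}{\Ob(C)} = NC/(N\End_C(\pt)\cup\Ob(C))$, valid because the two collapsed subsets meet in the $0$-simplex $\pt$, so the iterated quotient collapses their union to a single point. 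Writing $B_0 = N\End_{A_0}(\pt)\cup\Ob(A_0) \subset NA_0$ and $B_i = \Ob(A_i) \subset NA_i$, the nerve commutes with coproducts, so $NC = \coprod_{i\geq 0} NA_i$ and $N\End_C(\pt)\cup\Ob(C) = B_0\sqcup\coprod_{i\geq 1} B_i$; Proposition \ref{prop: unpointed coprod of cofiber} then gives an isomorphism $\frac{NC/N\End_C(\pt)}{\Ob(C)} \isom \frac{NA_0/N\End_{A_0}(\pt)}{\Ob(A_0)} \vee \bigvee_{i\geq 1}\frac{NA_i}{\Ob(A_i)}$, using that $\frac{NA_0}{B_0}$ is exactly the reduced source for $A_0$.

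With these identifications in hand, I would write down a commutative ladder whose bottom row is the wedge $N\eta_{A_0}\vee\bigvee_{i\geq 1} N\eta_{A_i}$, whose left column is the source isomorphism just described, and whose right column is the composite of the wedge-to-free-product weak equivalence with the isomorphism $N(U[A_0]\ast\Ast U(A_i)) \isom NU[C]$. The bottom map is a weak equivalence because it is a wedge of the weak equivalences produced by the two connected lemmas; every vertical map is an isomorphism or a weak equivalence; hence the top map $N\eta_C$ is a weak equivalence, as required.

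The main obstacle is the bookkeeping in the source decomposition, in particular the asymmetry that $N\End_C(\pt)$ lives entirely in the basepoint component $A_0$. I expect verifying the identity $\frac{NC/N\End_C(\pt)}{\Ob(C)} = NC/(B_0\sqcup\coprod_{i\geq 1}B_i)$ and matching it, component by component, against the reduced source for $A_0$ and the unreduced sources for the $A_i$ to be where the real care is needed; once the left and right columns of the ladder are correctly identified, the conclusion follows formally, exactly as in the unreduced Lemma \ref{lem: thm in case of one level cat}.
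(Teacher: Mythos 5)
Your proposal is correct and follows essentially the same route as the paper's own proof: the same decomposition $C=A_0\sqcup\coprod_{i\neq 0}A_i$ into connected components, the same use of Lemma \ref{lem: red thm in case of connected cat} on the basepoint component and Lemma \ref{lem: thm in case of connected cat} on the others, and the same assembly via Proposition \ref{prop: unpointed coprod of cofiber}, Proposition \ref{prop: how red U commute with coprod}, and the wedge-to-free-product weak equivalence. The only difference is cosmetic: you spell out the iterated-quotient identification of the source slightly more explicitly than the paper's diagram does.
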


\begin{proof}
Decompose $C=A_0\sqcup \coprod_{i\neq 0} A_i$ into its connected components where $A_0$ contains the basepoint $\pt$. Since $C$ is equivalent to a totally disconnected category by assumption, each connected component $A_i$ is equivalent to a category with just one object. Thus Lemma \ref{lem: red thm in case of connected cat} gives a weak homotopy equivalence $N\eta_{A_0}:\frac{NA_0/N\End_{A_0}(\pt)}{\Ob(A_0)}\to NU[A_0]$ in the reduced case while Lemma \ref{lem: thm in case of connected cat} gives weak homotopy equivalences $N\eta_{A_i}:\frac{NA_i}{\Ob(A_i)}\to NU(A_i)$ for $i\neq 0$ in the unreduced case. Consider the following commutative diagram:
\begin{diagram}
\frac{N(A_0\sqcup\coprod_{i\neq 0} A_i)/N\End_C(\pt)}{\Ob(A_0\sqcup\coprod_{i\neq 0} A_i)} & \rTo^{N\eta_C} & NU[A_0\sqcup\coprod_{i\neq 0} A_i] \\
\uTo<{f}                                 &                              & \uTo>{k}            \\
\frac{(NA_0\sqcup\coprod_{i\neq 0} NA_i)/N\End_{A_0}(\pt)}{\Ob(A_0)\sqcup\coprod_{i\neq 0} N\!\Ob(A_i)}   &                              & N(U[A_0]\ast\Ast_{i\neq 0} U(A_i))     \\
\uTo<{g}                                 &                              & \uTo>{h}             \\
\frac{NA_0/N\End_{A_0}(\pt)}{\Ob(A_0)}\vee\bigvee_{i\neq 0} \frac{NA_i}{\Ob(A_i)} & \rTo^{N\eta_{A_0}\vee\bigvee N\eta_{A_i}} & NU[A_0]\vee\bigvee_{i\neq 0} NU(A_i)
\end{diagram}

To show that $N\eta_C$ is a weak homotopy equivalence, it suffices to show that every other map is a weak homotopy equivalence. The map $f$ is an isomorphism since $N\End_C(\pt)=N\End_{A_0}(\pt)$ and both $\Ob$ and $N$ commute with coproducts (see \ref{para: small cat} and \ref{para: nerve} and respectively). The map $g$ is an isomorphism by Proposition \ref{prop: unpointed coprod of cofiber}. The map $h$ is a weak homotopy equivalence (see \ref{para: dunno}.) The map $k$ is an isomorphism by Proposition \ref{prop: how red U commute with coprod}.
\end{proof}

The proof of Proposition \ref{prop: red geom realization of nerve of U(C)} then follows exactly as in the unreduced case.

\section{Proof of Theorem \ref{thm: cofiber seq for actions}} \label{sec: smon actions}

Let $X$ be a right $M$-set where $M$ is a monoid. Recall from the introduction that its
{\textit{action category}} $X//M$ is the category of elements $\int^{M^\op} \! X$ of the functor
$X\co M^\op\to \Set$. Explicitly, an object of $X//M$ is an element of $X$ and a morphism $x\to y$
in $X//M$ is a monoid element $m\in M$ such that $y=xm$. We denote this morphism by $m\co x\to xm$.
Given the action of a simplicial monoid $M$ on a simplicial set $X$, its {\emph{simplicial action
category of $X$}}, also written as $X//M$, is defined dimensionwise by setting
$(X//M)_n:=X_n//M_n$. If the action is pointed, that is to say, the set $X$ has a basepoint which
is fixed by the $M$-action, then the action category $X//M$ is a pointed category whose
distinguished object is the basepoint of $X$.

If $M$ is a discrete monoid that acts on a simplicial set $X$, then $N(X//M)$ is the homotopy colimit of the functor $X\co M^\op \to \sSet$. (More accurately, this is the uncorrected homotopy colimit \cite{Shulman09} or the weak 2-colimit {\cite{nLabAction, nLabLimit}})

\begin{lemma} \label{lem: criterion for iso in act cat}
Let $X$ be a $M$-set where $M$ is a monoid. A morphism $m\co x\to xm$ in $X//M$ is an isomorphism if and only if $m$ is invertible in $M$.
\end{lemma}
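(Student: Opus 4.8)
The plan is to unwind the definitions of composition and identity in $X//M$ and then match a categorical inverse with a monoid inverse directly. First I would record the two structural facts I need. The identity morphism on an object $x$ is the monoid element $1\co x\to x$, since $x\cdot 1=x$, and the composite of $m\co x\to xm$ followed by $k\co xm\to xmk$ is the morphism $mk\co x\to xmk$ (equivalently $k\circ m=mk$ in the usual convention). The crucial point, which deserves emphasis, is that a morphism in the category of elements $\int^{M^\op}X$ literally remembers its underlying monoid element: two morphisms with the same source and target are equal if and only if the corresponding monoid elements coincide. In particular, an endomorphism $n\co x\to x$ equals $1_x$ if and only if $n=1$ in $M$, even though many monoid elements may fix $x$.

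For the easy direction, I would assume $m$ is invertible in $M$ with inverse $m^{-1}$. Then $(xm)\cdot m^{-1}=x$, so $g:=m^{-1}\co xm\to x$ is a well-defined morphism of $X//M$. Unwinding composition, $g\circ m$ is the monoid element $mm^{-1}=1$, hence $g\circ m=1_x$, and likewise $m\circ g$ is $m^{-1}m=1$, hence $m\circ g=1_{xm}$. Thus $g$ is a two-sided inverse of $m$, so $m$ is an isomorphism.

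For the converse, I would assume $m\co x\to xm$ is an isomorphism with inverse $g\co xm\to x$. By definition $g$ is a monoid element, say $k$, satisfying $xmk=x$. Applying the structural fact above to the categorical identities $g\circ m=1_x$ and $m\circ g=1_{xm}$ translates them into the monoid equations $mk=1$ and $km=1$ respectively. Hence the single element $k$ is a two-sided inverse of $m$, so $m$ is invertible in $M$.

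I expect the only real obstacle to be conceptual rather than computational: one must keep the composition convention straight and, above all, use that equality of endomorphisms in $X//M$ is equality of the underlying monoid elements. It is precisely this that blocks a spurious ``isomorphism'' arising from an element with merely a one-sided inverse, and it is why extracting both $mk=1$ and $km=1$ from the \emph{single} categorical inverse $g$ yields genuine invertibility in $M$ rather than only one-sided invertibility.
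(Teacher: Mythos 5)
Your proof is correct and follows essentially the same route as the paper's: identify identities and composition in $X//M$ with the monoid unit and product, then translate a categorical two-sided inverse into a monoid two-sided inverse and conversely. Your explicit remark that equality of morphisms in $X//M$ is equality of the underlying monoid elements is a worthwhile clarification that the paper's proof uses only implicitly.
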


\begin{proof}
For the forward direction, suppose $k\co xm\to x$ is an inverse of the morphism $m\co x\to xm$. Then $mk\co x\to x$ and $km\co y\to y$ are identity morphisms so that $mk=km=1$, that is to say, the elements $m$ and $k$ are inverses in $M$.

For the backward direction, suppose that $k$ is the inverse of $m$ in $M$. Then $k\co xm\to xmk=x$ is the inverse of the morphism $m\co x\to xm$.
\end{proof}

\begin{proposition} \label{prop: crit act cat equiv to one obj}
Let $X$ be a right $M$-set where $M$ is a monoid. The following are equivalent:
\begin{enumerate}
\item The action category $X//M$ is equivalent to a totally disconnected category.
\item For all $x\in X$ and $m\in M$, there exists an invertible $k\in M$ such that $xm=xk$.
\end{enumerate}
\end{proposition}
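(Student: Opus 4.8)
The plan is to prove the biconditional in Proposition~\ref{prop: crit act cat equiv to one obj} by characterizing when $X//M$ is equivalent to a totally disconnected category. Recall that a category is totally disconnected when each connected component has a single object, and that a category is equivalent to a totally disconnected one precisely when every connected component is equivalent to a one-object category — i.e. when within each connected component, every pair of objects is isomorphic. So the real content is: the action category $X//M$ has the property that any two objects in the same connected component are isomorphic if and only if condition (2) holds. The key tool is Lemma~\ref{lem: criterion for iso in act cat}, which tells me that a morphism $m\co x\to xm$ in $X//M$ is an isomorphism exactly when $m$ is invertible in $M$.

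For the direction (2)$\implies$(1), I would argue that condition (2) forces every morphism's target to also be reachable by an isomorphism. Given any morphism $m\co x\to xm$, condition (2) supplies an invertible $k\in M$ with $xm=xk$; then $k\co x\to xk=xm$ is an isomorphism by Lemma~\ref{lem: criterion for iso in act cat} with the same source and target as $m$. Thus whenever $x$ and $y$ lie in the same connected component, I can connect them by a zigzag of morphisms, and I want to upgrade each such morphism to an isomorphism. The cleanest way is to show that being connected by an invertible morphism is an equivalence relation on objects that coincides with lying in the same connected component: condition (2) guarantees every generating morphism can be replaced by an invertible one with identical endpoints, so reachability by a single morphism implies reachability by an isomorphism, and isomorphism is symmetric and transitive. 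Hence each connected component collapses to a single isomorphism class, giving equivalence to a totally disconnected category.

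For the direction (1)$\implies$(2), I would suppose $X//M$ is equivalent to a totally disconnected category and take any $x\in X$ and $m\in M$. The morphism $m\co x\to xm$ shows $x$ and $xm$ lie in the same connected component. Since the component is equivalent to a one-object category, $x$ and $xm$ must be isomorphic in $X//M$, so there is some isomorphism $k\co x\to xm$. By definition of morphisms in the action category, this $k\in M$ satisfies $xk=xm$, and by Lemma~\ref{lem: criterion for iso in act cat} the fact that $k$ is an isomorphism means $k$ is invertible in $M$. This is exactly condition (2).

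\textbf{The main obstacle} I anticipate is the (2)$\implies$(1) direction, specifically making precise that replacing each generating morphism by an invertible one with the same endpoints suffices to conclude the whole component is a single isomorphism class. The subtlety is that a zigzag connecting $x$ to $y$ may use morphisms pointing in both directions, so I must be careful that condition (2) lets me reverse or compose these into a genuine isomorphism; the symmetry comes from inverting the invertible replacements, and transitivity from composing isomorphisms, but I should state clearly that condition (2) applies to \emph{every} morphism in the component (not just those out of a fixed basepoint) so that the argument propagates along the entire zigzag. The reverse direction is essentially immediate from the definitions and Lemma~\ref{lem: criterion for iso in act cat}.
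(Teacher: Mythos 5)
Your proposal is correct and follows essentially the same route as the paper: both directions hinge on Lemma~\ref{lem: criterion for iso in act cat}, the forward direction reads off an invertible $k$ from the isomorphism $x\to xm$ forced by total disconnectedness, and the reverse direction replaces each generating morphism by an invertible one with the same endpoints and then composes along a zigzag. The paper's proof is just a slightly more terse version of the same argument (it also splits off the trivial case $xm=x$, which your argument handles uniformly), so no substantive differences to report.
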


\begin{proof}
$1\implies 2$. Let $x\in X$ and $m\in M$ be given. If $m$ fixes $x$, then take $k$ to be the identity of $M$. Otherwise $x$ and $xm$ are distinct objects of $X//M$ connected by the morphism $m\co x\to xm$. The assumption that $X//M$ is equivalent to a totally disconnected category implies that any two objects in the same connected component are isomorphic. By the previous Lemma \ref{lem: criterion for iso in act cat}, this implies that there exists $k$ is invertible in $M$ such that $xm=xk$.

$2\implies 1$. Suppose that for all $x\in X$ and $m\in M$, there exists an invertible $k\in M$ such that $xm=xk$. It suffices to show that if two distinct objects $z$ and $w$ are connected by a single morphism $m\co z\to w$, then $z$ and $w$ are isomorphic in $X//M$. This would imply that any two distinct objects $x$ and $y$ in the same connected component of $X//M$ are connected by a finite sequence of isomorphisms and hence are isomorphic, thus $X//M$ is equivalent to a totally disconnected category.

Indeed, let two distinct objects $z$ and $w$ connected by a single morphism $m\co z\to w$ be given. By assumption, there there exists an invertible $k\in M$ such that $w=zm=zk$. Lemma \ref{lem: criterion for iso in act cat} implies that $k\co z\to zk=w$ is an isomorphism. This completes the proof.
\end{proof}

Denote by $WM$ any contractible simplicial set with a free $M$-action. Any two such simplicial sets are equivariantly homotopy equivalent. Let $EM:=|WM|$ denote the geometric realization of $WM$. The {\textit{bar construction of $G$}} is the orbit space $\bar{W}M:=WM/M$. The {\textit{classifying space of $M$}} is the geometric realization $BM:=|\bar{W}M|$. In fact $B(M//M)\simeq BM$ where $M//M$ is the action category of the right translation action on $M$(see Lemma 4.2 of \cite{BousfieldKan87}.) If $G$ is a discrete group, its classifying space $BG$ is the Eilenberg-Mac Lane space $K(G,1)$. View a simplicial monoid $M$ as a simplicial category with one object in each dimension, there is a homotopy equivalence $|NM|\simeq BM$ (see Section 6.2 in \cite{GaoThesis}.) This warrants the definition that the classifying space of a simplicial category $C$ is $BC:=|NC|$ (see \ref{para: nerve}.)

Consider the action of $M$ on a simplicial set $X$. The free simplicial $M$-set associated to $X$ is $X\times WM$ with the diagonal action. The {\textit{Borel construction of $X$}} is the orbit space $X\times_G WM:=(X\times WM)/M$. For example, the Borel construction of the $M$-action on the standard 0-simplex $\Delta[0]=*$ is bar construction of $M$:
\[
\pt\times_M WM\simeq \bar{W}M
\]
Suppose the $M$-action is pointed, that is to say, the simplicial set $X$ has a basepoint and the $M$-action fixes the basepoint. The {\textit{reduced Borel construction}} of this pointed action, written $X\rtimes_M WM$, is the homotopy cofiber of $\pt\times_M WM\to X\times_M WM$.

\begin{proof}[Proof of Theorem \ref{thm: cofiber seq for actions}]
Consider the pointed simplicial action category $X//M$. By definition, we have $\Ob(X//M)=X$. The remarks just before Lemma \ref{lem: nerve unit map of monoid} implies
\begin{equation} \label{eq: expression for ob X//M}
B\!\Ob(X//M)=|N\!\Ob(X//M)|=|NX|=|X|.
\end{equation}

By definition, the reduced Borel construction $X\rtimes_M EM$ is the homotopy cofiber of $\pt\times_M EM\to X\times_M EM$. Thus, it is also the homotopy cofiber of $N(\pt//M)\to N(X//M)$. Note that $\pt//M=\End_{X//M}(\pt)$. Therefore $X\rtimes_M EM \simeq N(X//M)/N\!\End_{X//M}(\pt)$. Take the geometric realization to obtain the homotopy equivalence homotopy equivalence
\begin{equation} \label{eq: expression for X//M}
|X\rtimes_M EM| \simeq B(X//M)/B\End_{X//M}(\pt).
\end{equation}

Finally, a routine verification of the universal property Proposition \ref{prop: univ prop of red universal mon} gives the monoid isomorphism:
\begin{equation} \label{eq: U[X//M] and JMX}
J^M[X] \isom U[X//M]
\end{equation}

By assumption, for all $n$, for all $x\in X_n$ and $m\in M_n$, there exists an invertible $k\in M_n$ such that $xm=xk$. Hence Proposition \ref{prop: crit act cat equiv to one obj} implies that $(X//M)_n$ is equivalent to a totally disconnected category. By Proposition \ref{prop: red geom realization of nerve of U(C)}, the classifying space $BU[X//M]]$ is the homotopy cofiber of $B\!\Ob(X//M)\to B(X//M)/B\End_{X//M}(\pt)$. By \eqref{eq: expression for ob X//M} and \eqref{eq: expression for X//M} and \eqref{eq: U[X//M] and JMX}, we see that $BJ^M[X]$ is the homotopy cofiber of $|X|\to |X\rtimes_M EM|$. This completes the proof.
\end{proof}


\section{Examples}

The following example gives a family of simplicial monoid actions each of which satisfies the sufficient condition in Theorem \ref{thm: cofiber seq for actions}. This family contains all trivial simplicial monoid actions and all simplicial group actions. This example shows that Theorem \ref{thm: cofiber seq for actions} strictly generalizes the results of Carlsson and Wu.

\begin{example} \label{eg: MxG example}
Begin with the action of a simplicial group $G$ on a pointed simplicial set $X$. Consider the product $M\times G$ of a simplicial monoid $M$ with $G$. Extend the action by letting $(m,1)$ act trivially for all $m\in M_n$. Since $x(m,g)=x(m,1)(1,g)=x(1,g)$ and $(1,g)$ is invertible, this $(M\times G)$-action satisfies the sufficient condition in Theorem \ref{thm: cofiber seq for actions}. Thus $BJ^{M\times  G}[X]$ is the homotopy cofiber of $|X|\to |X\rtimes_{M\times G} E(M\times G)|$.

Set $G=1$ to obtain a trivial simplicial monoid action; set $M=1$ to obtain a simplicial group action. However, in general, this $(M\times G)$-action is neither a simplicial group action nor a trivial simplicial monoid action.
\end{example}

Given a simplicial group $G$ that acts on a pointed simplicial set $X$, the pointed simplicial action category $X//G$ is in fact a groupoid. One checks that $U[X//G]$ is in fact a group, hence $J^M[X]$ can be alternately defined as the {\emph{reduced universal group}} on $X//G$. We leave the reader to formulate the precise definition (or see \cite{GaoWu}.)

Given a pointed simplicial $G$-set $X$ where $G$ is a {\emph{discrete}} simplicial group, Carlsson \cite{Carlsson84} defined a simplicial group, which is essentially the $J^G[X]$-construction. We generalize Carlsson's result in \cite{Carlsson84} to a general simplicial group.

\begin{corollary} \label{cor: Carlsson for sgrp}
Let $X$ be a pointed simplicial $G$-set where $G$ is a simplicial group. Then there is a homotopy equivalence:
$$|J^G[X]|\simeq\Omega(|X\rtimes_GEG|/|X|).$$
\end{corollary}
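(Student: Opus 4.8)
The plan is to combine Theorem \ref{thm: cofiber seq for actions} with the equivalence $|H|\simeq\Loop BH$ for simplicial groups $H$ recalled in \ref{para: loop nerve sgrp}. The only substantive point is to observe that when $G$ is a simplicial group, the construction $J^G[X]$ is itself a simplicial group, so that the looping formula is available.

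First I would check the hypothesis of Theorem \ref{thm: cofiber seq for actions}. Because $G$ is a simplicial group, every $m\in G_n$ is invertible, so taking $k=m$ shows that for all $n$, all $x\in X_n$ and all $m\in G_n$ there is an invertible $k\in G_n$ with $xm=xk$. Hence the theorem applies and $BJ^G[X]$ is the homotopy cofiber of the inclusion $|X|\into |X\rtimes_G EG|$. Since this is an inclusion of CW complexes, and hence a cofibration, its homotopy cofiber is homotopy equivalent to the strict quotient, giving
\[
BJ^G[X]\simeq |X\rtimes_G EG|/|X|.
\]

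Next I would record that $J^G[X]=U[X//G]$ is a simplicial group. In each dimension $X_n//G_n$ is a groupoid: by Lemma \ref{lem: criterion for iso in act cat} every morphism $m\co x\to xm$ is an isomorphism when $m$ is invertible, as it is here. For a groupoid $C$ the universal monoid $U(C)$ is a group, since each generating morphism $f$ is inverted by its categorical inverse $f^{-1}$, the identities of $C$ all becoming the unit of $U(C)$. Passing to the reduced quotient $U[C]=\coker(U(\End_C(\pt))\to U(C))$ quotients a group by the normal closure of a subgroup and so yields a group again; this is the remark recorded just before the statement. Thus $J^G[X]$ is a group in each dimension, i.e. a simplicial group.

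Finally I would apply \ref{para: loop nerve sgrp} to $H=J^G[X]$ to obtain $|J^G[X]|\simeq \Loop BJ^G[X]$, and combine it with the first step:
\[
|J^G[X]|\simeq \Loop BJ^G[X]\simeq \Loop\bigl(|X\rtimes_G EG|/|X|\bigr).
\]
The main obstacle, such as it is, is the middle step: the looping formula $|H|\simeq\Loop BH$ is only valid once we know that $J^G[X]$ is a genuine simplicial group rather than merely a simplicial monoid, so the verification that $U[X//G]$ is dimensionwise a group is the crux on which the whole argument rests.
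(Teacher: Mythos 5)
Your proposal is correct and follows essentially the same route as the paper: verify the invertibility hypothesis of Theorem \ref{thm: cofiber seq for actions} (trivially, with $k=m$), identify the homotopy cofiber with the quotient $|X\rtimes_G EG|/|X|$, and apply the looping equivalence of \ref{para: loop nerve sgrp} after noting that $J^G[X]=U[X//G]$ is a simplicial group. The only difference is that you spell out why $U[X//G]$ is dimensionwise a group (via $X//G$ being a groupoid), a point the paper merely asserts in the remark preceding the corollary and delegates to a reference.
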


\begin{proof}
Since every element of $G_n$ is invertible, Theorem \ref{thm: cofiber seq for actions} applies to give $BF^M[X] \simeq |(X\rtimes_G EG)/X|$. Since $J^G[X]$ is a simplicial group, apply \eqref{para: loop nerve sgrp} to obtain $|J^G[X]|\simeq \Loop BJ^G[X]$. Combine these two homotopy equivalences to obtain the required result.
\end{proof}

Set $G=1$ in the above example gives a result of Wu \cite{Wu97}. The category $\sMon$ of simplicial monoids is simplicially enriched over the category $\sSet_*$ of pointed simplicial sets. The tensor product of a simplicial monoid $M$ and a pointed simplicial set $X$ is defined by the following natural isomorphism in simplicial monoids $K$:
\begin{equation} \label{eq: adjoint def of tensor prod}
{\sMon}(X\tensor M, K) \isom {\sSet_*}(X,\sMon(M,K))
\end{equation}
An exposition of the tensor product can be found in \cite{MutluPorter98}.

For each $x\in X_n$, let $(M_n)_x$ denote an isomorphic copy of $M_n$. Wu \cite{Wu97} expressed this tensor product explicitly:
\begin{equation} \label{eq: Wu's expr of tensor prod}
(X\tensor M)_n = \coker\left((M_n)_*\to \Ast_{x\in X_n} (M_n)_x \right).
\end{equation}
In that article, Wu computed the classifying space of $X\tensor M$ to be the following.

\begin{corollary}[Wu] \label{cor: Wu's result}
Let $M$ be a simplicial monoid and $X$ be a pointed simplicial set. There is a homotopy equivalence:
\[
B(X\tensor M) \simeq |X|\smash BM.
\]
\end{corollary}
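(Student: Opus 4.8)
The plan is to recognize $X\tensor M$ as the $J^M[X]$-construction for the \emph{trivial} action of $M$ on $X$, and then read off its classifying space from Theorem~\ref{thm: cofiber seq for actions}. First I would identify the two simplicial monoids. Let $M$ act trivially on $X$. Then the only morphisms of the action category are endomorphisms, so $(X//M)_n=\coprod_{x\in X_n}(M_n)_x$ is a coproduct of one-object categories indexed by $X_n$. Applying Proposition~\ref{prop: how red U commute with coprod} with $A_0=(M_n)_*$ the basepoint component gives $U[(X//M)_n]\isom U[(M_n)_*]\ast\Ast_{x\neq *}U((M_n)_x)$; here $U[(M_n)_*]$ is trivial because $\End_{(M_n)_*}(\pt)$ is all of $(M_n)_*$, while $U((M_n)_x)\isom(M_n)_x$. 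Comparing with the explicit formula \eqref{eq: Wu's expr of tensor prod}, namely $(X\tensor M)_n=\coker((M_n)_*\to\Ast_{x\in X_n}(M_n)_x)=\Ast_{x\neq *}(M_n)_x$, yields dimensionwise isomorphisms; a routine check that these respect the simplicial structure (or a comparison of the defining universal properties \eqref{eq: adjoint def of tensor prod} and Proposition~\ref{prop: univ prop of red universal mon}) promotes them to an isomorphism of simplicial monoids $X\tensor M\isom J^M[X]$ for the trivial action. In particular $B(X\tensor M)\isom BJ^M[X]$.

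Next I would apply the theorem. The trivial action satisfies its hypothesis: for any $x$ and $m$ we have $xm=x=x\cdot 1$, so $k=1$ is an invertible witness. Theorem~\ref{thm: cofiber seq for actions} then identifies $BJ^M[X]$ with the homotopy cofiber of the inclusion $|X|\to|X\rtimes_M EM|$, exactly as recorded in Example~\ref{eg: MxG example} with $G=1$. It remains to compute this target, together with the inclusion, explicitly for the trivial action.

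For the trivial action the diagonal $M$-action on $X\times WM$ is trivial on the first factor, so $X\times_M WM\isom X\times\bar{W}M$ and $\pt\times_M WM=\bar{W}M$. Since geometric realization preserves finite products, $|X\times_M WM|\cong|X|\times BM$ and $|\pt\times_M WM|\cong BM$. Hence the reduced Borel construction $|X\rtimes_M EM|$, being the homotopy cofiber of the basepoint-induced map $BM\to|X|\times BM$, is (up to homotopy) the strict quotient $(|X|\times BM)/(\pt\times BM)=|X|\halfsmash BM$. Choosing a base vertex of $WM$ identifies the inclusion $|X|\to|X\rtimes_M EM|$ with $x\mapsto[(x,b_0)]$, where $b_0\in BM$ is the realized basepoint.

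Finally I would compute the outer homotopy cofiber as a two-stage collapse. Taking the cofiber of $|X|\to|X|\halfsmash BM$ collapses the image $|X|\times\{b_0\}$ on top of the already collapsed slice $\{\pt\}\times BM$; the net effect is to collapse $(\{\pt\}\times BM)\cup(|X|\times\{b_0\})=|X|\wedgesum BM$ inside $|X|\times BM$, which is precisely $|X|\smash BM$. This yields $B(X\tensor M)\isom BJ^M[X]\simeq|X|\smash BM$. I expect the main obstacle to lie in the bookkeeping of this last step: one must verify that the inclusion furnished by Theorem~\ref{thm: cofiber seq for actions} really lands in the $b_0$-slice, and that the successive homotopy cofibers may be modeled by strict quotients along cofibrations, so that their composite is genuinely the smash product and not merely weakly equivalent to it. The identification $X\tensor M\isom J^M[X]$ of the first paragraph is the other point to handle with care, since it must be established as simplicial monoids rather than only dimensionwise.
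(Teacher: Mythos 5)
Your proposal is correct and follows essentially the same route as the paper: identify $X\tensor M$ with $J^M[X]$ for the trivial action (the paper does this by verifying the universal property of Proposition~\ref{prop: univ prop of red universal mon}, one of the two options you mention), invoke Theorem~\ref{thm: cofiber seq for actions} via Example~\ref{eg: MxG example} with $G=1$, and then compute the cofiber of $|X|\to|X\rtimes_M EM|\simeq(|X|\times BM)/(\pt\times BM)$ as the two-stage collapse giving $|X|\smash BM$. Your closing remarks about landing in the basepoint slice and modeling homotopy cofibers by strict quotients are reasonable points of care that the paper passes over silently.
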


\begin{proof}
Verify the universal property (Proposition \ref{prop: univ prop of red universal mon} below) to show that $X\tensor M$ is $U[X//M]$ where $X//M$ is the pointed simplicial action category associated to the trivial action of $M$ on $X$. Hence by definition of the $J^M[X]$-construction:
\begin{equation} \label{eq: tensor prod as univ mon}
X\tensor M \isom J^M[X]
\end{equation}

From Example \ref{eg: MxG example}, set $G$ to be the trivial simplicial group. The example states that $BJ^{M}[X]$ is the homotopy cofiber of $|X|\to |X\rtimes_{M} EM|$, where $M$ acts trivially. Combine this with \eqref{eq: tensor prod as univ mon} to obtain:
\begin{equation} \label{eq: B tensor prod as stunted}
B(X\tensor M) \simeq \frac{|X\rtimes_M EM|}{|X|}
\end{equation}

By definition of the smash product:
\begin{equation} \label{eq: temp 1}
|X|\smash BM=\frac{|X|\times BM}{|X|\vee BM}=\frac{(|X|\times BM)/(\pt\times BM)}{|X|}
\end{equation}
Since the action is trivial, there is a homotopy equivalence:
\begin{equation} \label{eq: temp 2}
|X\rtimes_M EM|=|X\times_M EM|/|\pt\times_M EM| \simeq |X|\times BM/(\pt\times BM).
\end{equation}
Compare \eqref{eq: temp 1} and \eqref{eq: temp 2} to obtain:
\[
|X|\smash BM\simeq \frac{|X\rtimes_M EM|}{|X|}.
\]
Compare with \eqref{eq: B tensor prod as stunted} to obtain the required homotopy equivalence.
\end{proof}

The constructions of James and Milnor are classical instances of the tensor product $X\tensor M$. James' construction \cite{James55} on a pointed simplicial set $X$ is the simplicial monoid $J[X]$ obtained by applying the reduced free monoid functor in each dimension. James' construction is $X$ tensored with the discrete monoid $\N$ of natural numbers under addition:
\begin{equation} \label{eq: James as X tensor N}
J[X]\isom X\tensor \N
\end{equation}
This can be shown directly with \eqref{eq: Wu's expr of tensor prod} or by verifying the universal property of \eqref{eq: adjoint def of tensor prod} or \eqref{eq: tensor prod as univ mon}.

\begin{corollary}
Let $X$ be a pointed simplicial set. Denote by $\Sigma$ the reduced suspension. There is a homotopy equivalence:
\[
BJ[X] \simeq \Sigma |X|
\]
\end{corollary}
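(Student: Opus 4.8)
The plan is to combine James' identification of his construction as a tensor product with the already-computed classifying space of Wu's construction, thereby reducing the whole statement to the single computation $B\N\simeq S^1$. First I would invoke \eqref{eq: James as X tensor N}, which gives the isomorphism of simplicial monoids $J[X]\isom X\tensor\N$, where $\N$ is the discrete monoid of natural numbers under addition. Applying the classifying-space functor and then Corollary \ref{cor: Wu's result} with $M=\N$ yields
\[
BJ[X]\simeq B(X\tensor\N)\simeq |X|\smash B\N.
\]
It then remains only to identify $B\N\simeq S^1$, after which the definition of the reduced suspension as smashing with the circle, $|X|\smash S^1=\Sigma|X|$, completes the argument.

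The only step requiring genuine work is the identification $B\N\simeq S^1$, which I would carry out inside the paper's own framework rather than by external citation. The key observation is that $\N$ is the universal monoid on the arrow category, $U([1])\isom\N$: the poset category $[1]=\{0\to 1\}$ has a single nonidentity morphism $t$, and since $t$ is not composable with itself there are no relations, so $U([1])$ is freely generated by $t$, with $t^n$ corresponding to $n\in\N$. (Note that this uses $[1]$ itself, not its groupoid completion $\barone$, whose universal monoid is $\Z$.) Because $[1]\simeq[0]$ is equivalent to a one-object category, Lemma \ref{lem: thm in case of connected cat} supplies a weak homotopy equivalence $N\eta_{[1]}\co N[1]/\Ob([1])\to NU([1])=N\N$.

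Next I would compute the left-hand side concretely. The nerve $N[1]$ is the standard $1$-simplex and $\Ob([1])=\{0,1\}$, so $N[1]/\Ob([1])$ is the $1$-simplex with its two vertices collapsed to a single point; its geometric realization is the interval $[0,1]$ with its endpoints identified, namely $S^1$. Hence $B\N=|N\N|\simeq S^1$, and assembling the chain
\[
BJ[X]\simeq |X|\smash B\N\simeq |X|\smash S^1=\Sigma|X|
\]
gives the result. I expect no serious obstacle beyond this computation: the outer two equivalences are direct quotations of \eqref{eq: James as X tensor N} and Corollary \ref{cor: Wu's result}, and the final equality is just the definition of reduced suspension. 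The one point demanding a little care is making the isomorphism $U([1])\isom\N$ precise and confirming that $|N[1]|/\{0,1\}$ is literally a circle; both are elementary and already implicit in the earlier lemmas on $\barone$.
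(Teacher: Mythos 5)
Your outer reduction coincides exactly with the paper's proof: quote \eqref{eq: James as X tensor N} to get $J[X]\isom X\tensor\N$, apply Corollary \ref{cor: Wu's result} with $M=\N$, and reduce everything to $B\N\simeq S^1$. But the one step you yourself flag as ``the only step requiring genuine work'' is the step that fails. The category $[1]$ is \emph{not} equivalent to a one-object category: an equivalence is fully faithful, so it would give a bijection $\Hom_{[1]}(1,0)\isom\Hom_{[0]}(*,*)$, which is impossible since the left side is empty. (Contractibility of $N[1]$ is weaker than, and does not imply, an equivalence $[1]\simeq[0]$.) Hence Lemma \ref{lem: thm in case of connected cat} does not apply to $C=[1]$, and you cannot conclude from it that $N\eta_{[1]}\co N[1]/\Ob([1])\to NU([1])=N\N$ is a weak homotopy equivalence. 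This is precisely why every two-object category in the paper's lemmas is the groupoid completion $\barone$, in which $0$ and $1$ \emph{are} isomorphic: Lemma \ref{lem: nerve unit map of barone} hands you $B\Z\simeq S^1$, not $B\N\simeq S^1$. The statement you want --- that the map $S^1=N[1]/\Ob([1])\to N\N$ sending the nondegenerate $1$-simplex to the generator is a weak equivalence --- is true, but it is literally equivalent to $B\N\simeq S^1$ and so cannot be obtained for free from the machinery you cite; your argument is circular at exactly the point where content is needed.

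The paper closes this gap with an external input: the Dwyer--Kan result that a free category on a quiver has the same classifying space as its groupoid completion. The monoid $\N$, viewed as a one-object category, is free on the quiver with one vertex and one loop, and its groupoid completion is $\Z$; hence $B\N\simeq B\Z=K(\Z,1)=S^1$. (Alternatively, one can compute $H_*(B\N)$ directly from the free $\Z[t]$-resolution $0\to\Z[t]\xrightarrow{\,t-1\,}\Z[t]\to\Z\to0$ and compare with $B\Z$, but some such argument is unavoidable.) With that substitution your proof becomes the paper's proof; everything else you wrote, including $U([1])\isom\N$ and the identification of $|N[1]|/\{0,1\}$ with the circle, is correct but insufficient on its own.
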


\begin{proof}
Recall that the circle is the classifying space of $\N$:
\begin{equation} \label{eq: circle as BN}
B\N \simeq S^1
\end{equation}
This can be deduced from a general result of Dwyer and Kan \cite{DwyerKan80}. If $W$ is the free category on a quiver (a multi-diagraph with loops), then it has the same classifying space as its groupoid completion:
\[
BW \simeq B\bar{W}
\]
The homotopy equivalence \eqref{eq: circle as BN} follows since $B\bar{\N}\simeq B\Z= K(\Z,1)= S^1$.

Apply Corollary \ref{cor: Wu's result}:
\begin{align*}
BJ[X] = B(X\tensor \N) &\simeq B\N \smash |X| \\
&\simeq S^1 \smash |X| \quad {\text{(by \eqref{eq: circle as BN})}}\\
&\simeq \Sigma |X|
\end{align*}
\end{proof}

Similarly, Milnor's $F[K]$ construction \cite{Milnor72} on a pointed simplicial set $K$ is obtained by applying the reduced free group functor in each dimension. (Milnor's original notation is $FK$. To indicate that this construction is reduced, we take the liberty in adding the square brackets.) Milnor's construction is $K$ tensored with the discrete group $\Z$ of integers under addition:
\begin{equation} \label{eq: Milnor as K tensor Z}
F[K]\isom K\tensor \Z
\end{equation}

The standard computation of $|F[K]|$ uses a simplicial argument that involves Kan's construction (see page 294 of \cite{GoerssJardine99}). Our proof below, in comparison, is categorial in nature.

\begin{corollary}
Let $K$ be a pointed simplicial set. Denote by $\Sigma$ the reduced suspension. There are homotopy equivalences:
\begin{align*}
BF[K] &\simeq \Sigma |K| \\
|F[K]| &\simeq \Omega \Sigma |K|
\end{align*}
\end{corollary}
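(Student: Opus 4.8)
The plan is to follow the proof of the preceding James corollary nearly verbatim, substituting the group $\Z$ for the monoid $\N$, and then to extract the loop-space statement from the looping result for simplicial groups. So I would prove the two equivalences in order, the first feeding directly into the second.

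For the first equivalence $BF[K]\simeq\Sigma|K|$, I would start from the isomorphism $F[K]\isom K\tensor\Z$ recorded in \eqref{eq: Milnor as K tensor Z} and apply Wu's Corollary \ref{cor: Wu's result}, which gives $BF[K]=B(K\tensor\Z)\simeq |K|\smash B\Z$. It then remains only to identify $B\Z$, and here the argument is actually simpler than in the James case: since $\Z$ is already a discrete group, its classifying space is directly the Eilenberg--Mac Lane space $K(\Z,1)=S^1$, with no groupoid completion required. Hence $BF[K]\simeq |K|\smash S^1=\Sigma|K|$.

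For the second equivalence, I would use that $F[K]$, being obtained by applying the reduced free group functor dimensionwise, is a simplicial group. The looping result \ref{para: loop nerve sgrp} then yields $|F[K]|\simeq\Omega BF[K]$, and combining this with the first equivalence gives $|F[K]|\simeq\Omega\Sigma|K|$. As a cross-check, the same loop-space statement also follows from Corollary \ref{cor: Carlsson for sgrp} applied to the trivial $\Z$-action on $K$, once one identifies $|K\rtimes_\Z E\Z|/|K|$ with $\Sigma|K|$ as in the first equivalence.

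I do not expect a genuine obstacle here, since the statement is assembled entirely from results established earlier. The only points meriting a line of verification are that $F[K]\isom K\tensor\Z$ (which can be checked against \eqref{eq: Wu's expr of tensor prod}, or via the universal property \eqref{eq: adjoint def of tensor prod}, exactly as for \eqref{eq: James as X tensor N}) and that $F[K]$ genuinely lands in simplicial groups so that \ref{para: loop nerve sgrp} applies. A subtle contrast worth flagging is that the $\N$-case required passing through $\bar{\N}\simeq\Z$ via the Dwyer--Kan identity $BW\simeq B\bar{W}$, whereas for $\Z$ the identification $B\Z=S^1$ is immediate, which is the only place the present proof genuinely diverges from the James one.
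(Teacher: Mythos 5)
Your proposal matches the paper's proof essentially verbatim: both deduce $BF[K]=B(K\tensor\Z)\simeq B\Z\smash|K|\simeq S^1\smash|K|=\Sigma|K|$ from Corollary \ref{cor: Wu's result} together with the identification $B\Z=K(\Z,1)=S^1$, and then obtain $|F[K]|\simeq\Omega BF[K]\simeq\Omega\Sigma|K|$ from \ref{para: loop nerve sgrp} since $F[K]$ is a simplicial group. The argument is correct and no further comment is needed.
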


\begin{proof}
The classifying space of Milnor's construction can be deduced from Corollary \ref{cor: Wu's result}:
\begin{align}
BF[K] = B(\Z \tensor K) &\simeq B\Z \smash |K| \notag\\
&\simeq S^1 \smash |K| \notag\\
&\simeq \Sigma |K|
\end{align}

Since $F[K]$ is a simplicial group, we can apply \ref{para: loop nerve sgrp}:
\begin{align*}
|F[K]| &\simeq \Omega B F[K]  \\
&\simeq \Omega \Sigma |K|
\end{align*}
\end{proof}

\begin{example}
Let $X$ be the simplicial circle $S^1$ and $G$ be a discrete simplicial group. By Corollary \ref{cor: Wu's result},
\[
B(S^1\tensor G)\simeq |S^1| \smash BG \simeq \Sigma K(G,1)
\]
Since $S^1\tensor G$ is a simplicial group, \ref{para: loop nerve sgrp} implies that
\[
|S^1\tensor G| \simeq\Omega\Sigma K(G,1).
\]

The simplicial group $S^1\tensor G$ has been used to study the suspension
of the Eilenberg-Mac Lane spaces \cite{MutluPorter98,MikhailovWu10,Wu01}.
\end{example}

\end{document}